\newtheorem{theorem}{Theorem}[section]
\newtheorem{proposition}[theorem]{Proposition}
\newtheorem{lemma}[theorem]{Lemma} 
\theoremstyle{definition} 
\newtheorem{definition}[theorem]{Definition}
\newtheorem{conventions}[theorem]{Conventions}
\theoremstyle{remark} 
\newtheorem{example}[theorem]{Example}
\newcommand{\set}[1]{\left\{#1\right\}}
\newcommand{\setP}[2]{\left\{\,#1\,\middle|\,#2\,\right\}}
\DeclareSymbolFont{sfoperators}{OT1}{cmss}{m}{n}
\DeclareSymbolFontAlphabet{\mathsf}{sfoperators}
\def\operator@font{\mathgroup\symsfoperators}\makeatother
\DeclareMathOperator{\add}{add} \DeclareMathOperator{\nil}{nil}
\DeclareMathOperator{\mmod}{mod}
\newcommand{\rep}[1]{%
  {%
    \small%
    \begin{matrix}%
      #1%
    \end{matrix}%
  }%
} \DeclareMathOperator{\Ext}{Ext} \DeclareMathOperator{\Hom}{Hom}
\DeclareMathOperator{\End}{End} \DeclareMathOperator{\gldim}{gldim}
\DeclareMathOperator{\domdim}{domdim} \DeclareMathOperator{\projdim}{projdim}
\DeclareMathOperator{\ind}{ind} 
 \DeclareMathOperator{\findim}{findim}
\DeclareMathOperator{\injdim}{injdim} \DeclareMathOperator{\Coker}{Coker}
 \newcommand{\op}{\mathrm{op}}
\begin{document}
  
\date{\today}
  
\title[Nakayama-type phenomena in higher Auslander--Reiten theory]{Nakayama-type
  phenomena in higher Auslander--Reiten theory}

\author[G.~Jasso]{Gustavo Jasso}
\address[Jasso]{Mathematisches Institut\\
  Universit\"at Bonn\\
  Endenicher Allee 60\\
  D-53115 Bonn\\
  GERMANY} \email{gustavo.jasso@@math.lu.se} \urladdr{https://gustavo.jasso.info}
\curraddr{Lund University, %
  Centre for Mathematical Sciences, %
  Sölvegatan 18A, %
  22100 Lund, %
  Sweden%
}

\author[J.~K\"ulshammer]{Julian K\"ulshammer}
\address[K\"ulshammer]{Institut f\"ur Algebra und Zahlentheorie\\
  Universit\"at Stuttgart\\
  Pfaffenwaldring 57\\
  70569 Stuttgart\\
  GERMANY} \email{julian.kuelshammer@math.uu.se}
\urladdr{https://www.katalog.uu.se/profile/?id=N18-1115}
\curraddr{Department of Mathematics University of Uppsala, Box 480, Uppsala,
  Sweden}

\begin{abstract}
  This paper surveys recent contructions in higher Auslander--Reiten theory. We
  focus on those which, due to their combinatorial properties, can be regarded
  as higher dimensional analogues of path algebras of linearly oriented type
  $\mathbb{A}$ quivers. These include higher dimensional analogues of Nakayama
  algebras, of the mesh category of type $\mathbb{Z}\mathbb{A}_\infty$ and the
  tubes, and of the triangulated category generated by an $m$-spherical object.
  For $m=2$, the latter category can be regarded as the higher cluster category
  of type $\mathbb{A}_\infty$ whose cluster-tilting combinatorics are controlled
  by the triangulations of the cylic apeirotope.
\end{abstract}

\thanks{Part of this research was carried out during a visit of the second
  author at MPIM Bonn. The authors would like to thank the anonymous referee for helpful comments.}

\maketitle

\section{Introduction}

The category of finite dimensional representations of the quiver
\begin{equation*}
  \mathbb{A}_\infty^\infty\colon \begin{tikzcd}\dots\arrow{r}
    &-1\arrow{r}&0\arrow{r}&1\arrow{r}&\dots\end{tikzcd}
\end{equation*}
is well understood. The indecomposable representations are the interval modules
and its Auslander--Reiten quiver is of type $\mathbb{Z}\mathbb{A}_\infty$. The
category of nilpotent finite dimensional representations of the cylic quiver
$\tilde{\mathbb{A}}_{n-1}$ with $n$ vertices, obtained as the quotient of
$\mathbb{A}_\infty^\infty$ by identifyng to vertices if they are equal modulo
$n$, has a similar description. In this case its Auslander--Reiten quiver is
rank $n$ tube. Note that both the mesh category of type $\mathbb{ZA}_\infty$ and
the tubes are frequently occurring shapes of Auslander--Reiten components. For
our purposes, it is important to note that the categories of representations of
the Nakayama algebras embedd into those of the above quivers and hence admit a
nice combinatorial description.

In this survey we introduce higher dimensional analogues of the quivers
$\mathbb{A}_\infty^\infty$ and $\tilde{\mathbb{A}}_{n-1}$ from the viewpoint of
Iyama's higher Auslander--Reiten theory \cite{Iya07b,Iya07}. This allows us to
construct higher dimensional analogues of the Nakayama algebras, whose higher
Auslander--Reiten theory is controlled by higher dimensional analogues of the
mesh category of type $\mathbb{ZA}_\infty$ and the tubes \cite{JK16}. To
complement loc. cit., we give a few examples to illustrate how the homological
behaviour of the higher Nakayama algebras differs from that of their classical versions.

We also construct certain Calabi--Yau triangulated categories which can be
regarded as higher dimensional analogues of the triangulated categories
generated by spherical objects of dimension greater than or equal to $2$. In
particular, in analogy to the case of a $2$-dimensional spherical object, we
obtain a higher version of the cluster category of type $\mathbb{A}_\infty$
introduced by Holm and J{\o}rgensen \cite{HJ12}. In this case, the
cluster-tilting combinatorics of this category is controlled by triangulations
of the cyclic apeirotope (a higher dimensional analog of the $\infty$-gon).

Let us briefly desribe the structure of this article. Section
\ref{sec:auslander-iyama} recalls the Auslander correspondence and Iyama's
generalisation of it obtained by introducing cluster-tilting subcategories. We
also include a short introduction to higher Auslander--Reiten theory. In Section
\ref{sec:higher_nakayama} we recall the construction of the higher Nakayama
algebras in terms of the higher dimensional analogues of the quivers
$\mathbb{A}_\infty^\infty$ and $\tilde{\mathbb{A}}_{n-1}$. In Section
\ref{sec:obstructions} we give examples of higher Nakayama algebras which have
different homological properties than those of their classical versions.
Finally, in Section \ref{sec:cluster} we introduce the higher dimensional
analogues of the triangulated categories generated by spherical objects and of
the cluster category of type $\mathbb{A}_\infty$.

\begin{conventions}
  We fix a positive integer $d$ as well as a field $\mathbbm{k}$. We denote the
  duality of the category of finite dimensional $\mathbbm{k}$-vector
  spaces by $D:=\Hom_{\mathbbm{k}}(-,\mathbbm{k})$. By algebra we mean
  associative unital $\mathbbm{k}$-algebra. All modules we consider are finite dimensional left modules.  	  Two algebras $\Lambda$ and
  $\Lambda'$ are \emph{Morita equivalent} if their categories of modules are
  equivalent. Let $\Lambda$ be a finite dimensional algebra. We denote by $\ind
  \Lambda$ a complete set of representatives of the isomorphisms classes of
  finite dimensional indecomposable $\Lambda$-modules. Let $M$ be a $\Lambda$-module (or, more generally, an object in some additive category). We denote
  by $\add M$ the full subcategory of $\mmod\Lambda$ consisting of all
  $\Lambda$-modules which are direct summands of $M^n$ for some positive integer
  $n$.
\end{conventions}

\section{Preliminaries}
\label{sec:auslander-iyama}

In this section we recall the basics of higher Auslander--Reiten theory, namely
Auslander--Iyama correspondence and the basic theory of
$d$-representation-finite $d$-hereditary algebras. The reader is referred to
\cite{Iya07, Iya11, Iya08, IO11, IJ16, JK16c} for further reading.

\subsection{The Auslander--Iyama correspondence}

For the purpose of motivation we first recall classical Auslander
correspondence. This establishes a relationship between
representation-finiteness, a \emph{representation-theoretic} property, and being
an Auslander algebra, a \emph{homological} property. Moreover, the minimal
projective resolutions of simple modules of projective dimension $2$ over
Auslander algebras provided the first evidence for the existence of almost-split
sequences. Thus, Auslander correspondence can also be considered as one of the
seminal results in Auslander--Reiten theory.

On the representation-theoretic side let $\Lambda$ be a finite dimensional
algebra. Recall that $\Lambda$ is \emph{representation-finite} if there are only
finitely many indecomposable $\Lambda$-modules up to isomorphism. An equivalent
way of saying this is that there exists a finite dimensional $\Lambda$-module
$M$ such that every indecomposable $\Lambda$-module is a direct summand of $M$.
Such a $\Lambda$-module $M$ is called a \emph{representation generator of
  $\mmod\Lambda$}. 

On the homological side, let $\Gamma$ be a finite dimensional algebra and
\begin{equation*}
  \begin{tikzcd}[column sep=small]
    0\to\Gamma\to I^0\to I^1\to\cdots\to I^d\to\cdots
  \end{tikzcd}
\end{equation*}
a minimal injective coresolution of $\Gamma$. We remind the reader that the
\emph{dominant dimension of $\Gamma$} is defined as
\begin{equation*}
  \domdim\Gamma:=1+\sup\setP{i\in\mathbb{Z}}{I^i\text{ is projective}}.
\end{equation*}

A finite dimensional algebra $\Gamma$ is an \emph{Auslander algebra} if
\begin{equation*}
  \gldim\Gamma\leq 2\leq\domdim\Gamma.
\end{equation*}
Auslander algebras were introduced by the eponymous author in \cite{Aus71} where
the following theorem is also proven.

\begin{theorem}[Auslander correspondence]
  There is a one-to-one correspondence between Morita equivalence classes of
  representation-finite algebras and Morita equivalence classes of Auslander
  algebras. The correspondence associates to a re\-pre\-sen\-ta\-tion-finite
  algebra $\Lambda$ the algebra $\End_\Lambda(M)^{\op}$ for a representation
  generator $M$ of $\mmod \Lambda$. In the reverse direction, it associates to
  an Auslander algebra $\Gamma$ the algebra $\End_\Gamma(I^0)^{\op}$ where $I^0$
  is the injective hull of the regular representation.
\end{theorem}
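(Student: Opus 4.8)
The plan is to verify that each of the two assignments lands in the correct class of algebras and that they are mutually inverse on Morita equivalence classes. Throughout, fix a representation generator $M$ of $\mmod\Lambda$; since $\Lambda$ is representation-finite we may take $M=\bigoplus_{X\in\ind\Lambda}X$, so that $\add M=\mmod\Lambda$. In particular $M$ contains every indecomposable projective and every indecomposable injective, hence is simultaneously a generator and a cogenerator. Write $\Gamma=\End_\Lambda(M)^{\op}$ and $\Phi:=\Hom_\Lambda(M,-)\colon\mmod\Lambda\to\mmod\Gamma$.

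First I would record the basic properties of $\Phi$. It is left exact and restricts to an equivalence $\add M\xrightarrow{\ \sim\ }\proj\Gamma$; because $\add M=\mmod\Lambda$, this is an equivalence $\mmod\Lambda\xrightarrow{\ \sim\ }\proj\Gamma$. This single observation yields $\gldim\Gamma\leq 2$: given a $\Gamma$-module $X$, choose a minimal projective presentation $\Phi(N_1)\to\Phi(N_0)\to X\to 0$ arising, via the equivalence, from a morphism $f\colon N_1\to N_0$ in $\mmod\Lambda$. By left exactness the kernel of $\Phi(f)$ is $\Phi(\ker f)$, which is the second syzygy of $X$ and is projective since $\ker f\in\mmod\Lambda=\add M$. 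Thus $\projdim X\leq 2$ for every $X$.

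Next I would establish $\domdim\Gamma\geq 2$. Applying the left-exact functor $\Phi$ to the first two terms $0\to M\to E^0\to E^1$ of a minimal injective coresolution of $M$ over $\Lambda$ produces an exact sequence $0\to\Gamma\to\Phi(E^0)\to\Phi(E^1)$ of $\Gamma$-modules, and each $\Phi(E^i)$ is projective because $E^i\in\add M$. The key lemma, which I would isolate, is that $\Phi$ sends injective $\Lambda$-modules to injective $\Gamma$-modules whenever $M$ is a generator-cogenerator; granting it, the $\Phi(E^i)$ are projective-injective, so the minimal injective coresolution of $\Gamma$ begins with two projective terms and $\domdim\Gamma\geq 2$. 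This lemma is the technical heart of the forward direction; I expect to prove it by dualising, using the natural isomorphism $D\Phi(I)\cong\Hom_{\Lambda^{\op}}(DI,DM)$ together with the facts that $DI$ is projective over $\Lambda^{\op}$ and that $DM$ is again a generator-cogenerator.

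Finally, and this is where the main difficulty lies, I would show the assignments are mutually inverse. Starting from $\Lambda$ and $\Gamma=\End_\Lambda(M)^{\op}$, the injective hull of the regular representation is $I^0=\Phi(E)$ with $E$ the injective hull of $M$; full faithfulness of $\Phi$ on $\add M$ gives $\End_\Gamma(I^0)^{\op}\cong\End_\Lambda(E)^{\op}$, which is Morita equivalent to $\End_\Lambda(D\Lambda)^{\op}\cong\Lambda$ because $\add E$ equals the additive closure of the injective $\Lambda$-modules. Conversely, given an Auslander algebra $\Gamma$, the bound $\domdim\Gamma\geq 2$ lets one recover $\Gamma$ from $\Lambda:=\End_\Gamma(I^0)^{\op}$: by the standard characterisation of dominant dimension, if $e$ is an idempotent whose $\add(\Gamma e)$ is the additive closure of the projective-injective $\Gamma$-modules, then the natural map $\Gamma\to\End_{e\Gamma e}(e\Gamma)^{\op}$ is an isomorphism, exhibiting $\Gamma$ as $\End_\Lambda(e\Gamma)^{\op}$. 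The remaining point is that $e\Gamma e\cong\Lambda$ is representation-finite with $e\Gamma$ a representation generator, which is forced by $\gldim\Gamma\leq 2$ via the equivalence $\add(e\Gamma)\simeq\proj\Gamma$. I expect the bookkeeping in this reconstruction—tracking the idempotent $e$ and the two duality functors simultaneously—to be the most delicate part of the argument.
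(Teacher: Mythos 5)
A preliminary remark on the comparison: the paper itself contains no proof of this theorem --- it is quoted as Auslander's result from \cite{Aus71} --- so your proposal can only be measured against the standard argument (Auslander's, as in Auslander--Reiten--Smal\o{}, Chapter VI), which is indeed the architecture you follow. Your forward direction is essentially that argument and is sound: the equivalence $\add M\simeq\proj\Gamma$ plus left exactness of $\Phi=\Hom_\Lambda(M,-)$ gives $\gldim\Gamma\leq 2$, and the key lemma that $\Phi$ sends injectives to injectives is the standard route to $\domdim\Gamma\geq 2$. One correction there: your displayed isomorphism has one $D$ too many; it should read $\Phi(I)\cong\Hom_{\Lambda^{\op}}(DI,DM)$. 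Granting that, the lemma follows because $DI$ is a projective right $\Lambda$-module, so $\Phi(I)$ lies in $\add D(M_\Gamma)$, and $D(M_\Gamma)$ is injective because $M_\Gamma\cong\Hom_\Lambda(\Lambda,M)$ is a projective right $\Gamma$-module (here the generator property $\Lambda\in\add M$ enters).

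The genuine gap is in the converse direction. After invoking the double-centralizer property (correctly attributed to $\domdim\Gamma\geq 2$), you assert that representation-finiteness of $\Lambda\cong e\Gamma e$, with $e\Gamma$ a representation generator, ``is forced by $\gldim\Gamma\leq 2$ via the equivalence $\add(e\Gamma)\simeq\proj\Gamma$.'' That equivalence says nothing about $\Lambda$-modules outside $\add(e\Gamma)$, so this sentence is precisely the statement to be proved, not a proof of it --- and it is the mathematical heart of the converse, not the idempotent-and-duality ``bookkeeping'' you identify as the delicate part. The missing argument runs as follows: given $X\in\mmod\Lambda$, use that $e\Gamma$ is a cogenerator of $\mmod\Lambda$ (a consequence of $\domdim\Gamma\geq 2$, i.e.\ of Morita--Tachikawa) to build an exact sequence $0\to X\to N^0\to N^1$ with $N^i\in\add(e\Gamma)$; apply the left exact functor $\Hom_\Lambda(e\Gamma,-)$ to obtain an exact sequence $0\to\Hom_\Lambda(e\Gamma,X)\to Q^0\to Q^1\to C\to 0$ with $Q^i$ projective over $\Gamma$; since $\projdim_\Gamma C\leq 2$, Schanuel's lemma forces $\Hom_\Lambda(e\Gamma,X)$ to be projective, hence isomorphic to $\Hom_\Lambda(e\Gamma,N')$ for some $N'\in\add(e\Gamma)$; finally, the generator property of $e\Gamma$ lets you realise this isomorphism as $\Hom_\Lambda(e\Gamma,g)$ for some $g\colon N'\to X$ and then conclude that $g$ itself is an isomorphism, so $X\in\add(e\Gamma)$. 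A smaller, fixable gap of the same nature occurs in your first inverse direction: the identification $I^0=\Phi(E)$ requires showing that $\Phi$ carries the essential extension $M\subseteq E$ to an essential extension $\Gamma\subseteq\Phi(E)$, which again uses that $M$ is a generator; without it you only know $I^0$ is a direct summand of $\Phi(E)$, which is not yet enough to conclude $\add I^0=\Phi(\add D\Lambda)$.
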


Auslander correspondence is in fact a particular instance of Morita--Tachikawa
correspondence. This correspondence shifts the focus on the
`represen\-tation-theo\-retic' side away from the algebra $\Lambda$ to the
category $\add M$. In Auslander correspondence knowledge of $\Lambda$ is (up to
Morita equivalence) of course equivalent to knowledge of $\add M$ for $M$ a
representation generator.
 
Recall that a $\Lambda$-module $M$ is a \emph{generator-cogenerator of
  $\mmod\Lambda$} if every indecomposable projective and every indecomposable injective
$\Lambda$-module is a direct summand of $M$. In particular, a representation
generator is an example of a generator-cogenerator. A \emph{Morita--Tachikawa
  pair} is a pair $(\Lambda,M)$ consisting of a finite dimensional algebra
$\Lambda$ and generator-cogenerator $M$ of $\mmod\Lambda$. Two Morita--Tachikawa
pairs $(\Lambda,M)$ and $(\Lambda',M')$ are \emph{equivalent} if $\add M$ and
$\add M'$ are equivalent categories. Note that this implies that the algebras
$\Lambda$ and $\Lambda'$ are Morita equivalent: Indeed, a module $P$ is a projective object in $\add M$ (in the sense that every epimorphism to it splits) if and only if it is a projective $\Lambda$-module. It is easy to see that no other module can be projective as $M$ is a generator. For the converse let $P$ be a projective $\Lambda$-module and $f\colon M\to P$ be an epimorphism. Let $g$ be the composition of the projection $P\to \Coker f$ and the inclusion $\Coker f\to I$ into its injective hull. Then, $gf=0$ implies that $g=0$ as $f$ is an epimorphism. In particular $\Coker f=0$. It follows that $\add \Lambda\cong \add \Lambda'$ as both are defined by categorical properties in $\add M\cong \add M'$. Thus, $\Lambda$ and $\Lambda'$ are Morita equivalent.

\begin{theorem}[Morita--Tachikawa correspondence]
  There is a one-to-one correspondence between equivalence classes of
  Morita--Tachikawa pairs and Morita equivalence classes of finite dimensional
  algebras $\Gamma$ such that $\domdim\Gamma\geq 2$. The correspondence
  associates to a Morita--Tachikawa pair $(\Lambda,M)$ the algebra
  $\End_\Lambda(M)^{\op}$ and, in the other direction, associates to an algebra
  $\Gamma$ the Morita--Tachikawa pair $(\End_{\Gamma}(I^0)^{\op},I^0)$ for $I^0$
  the injective hull of $\Gamma$ as a left $\Gamma$-module.
\end{theorem}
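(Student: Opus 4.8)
The plan is to realise both assignments through the \emph{projectivisation} functor and to show that the two resulting round-trips are the identity on the relevant equivalence classes. Throughout, given a Morita--Tachikawa pair $(\Lambda,M)$ I write $\Gamma:=\End_\Lambda(M)^{\op}$ and $F:=\Hom_\Lambda(M,-)\colon\mmod\Lambda\to\mmod\Gamma$. The standing facts I would establish first are that, since $M$ is a generator (so that $\Lambda\in\add M$), the functor $F$ is fully faithful, and that it restricts to an equivalence $\add M\simeq\proj\Gamma$ with $F(M)=\Gamma$. Well-definedness of the forward assignment on equivalence classes is then immediate: an equivalence $\add M\simeq\add M'$ gives $\proj\Gamma\simeq\add M\simeq\add M'\simeq\proj\Gamma'$, so $\Gamma$ and $\Gamma'$ are Morita equivalent. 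In the reverse direction a Morita equivalence $\mmod\Gamma\simeq\mmod\Gamma'$ preserves dominant dimension, the class of projective--injective modules, and injective hulls, hence carries $(\End_\Gamma(I^0)^{\op},I^0)$ to an equivalent pair.

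Second, I would check that the forward assignment lands among algebras of dominant dimension at least $2$. Because $M$ is a cogenerator, every injective $\Lambda$-module lies in $\add M$; taking the beginning $0\to M\to J^0\to J^1$ of a minimal injective coresolution of $M$ and applying the left-exact functor $F$ yields an exact sequence $0\to\Gamma\to F(J^0)\to F(J^1)$, left-exactness being precisely the statement that $F$ preserves the kernel $M=\ker(J^0\to J^1)$. Since $J^0,J^1\in\add M$, the modules $F(J^0),F(J^1)$ are projective; and writing an injective $\Lambda$-module as $DP$ for a projective (right) $\Lambda$-module $P$, the natural isomorphism $\Hom_\Lambda(M,DP)\cong D(P\otimes_\Lambda M)$ together with $\Lambda\in\add M$ shows that $F$ carries injective $\Lambda$-modules to injective $\Gamma$-modules, so $F(J^0),F(J^1)$ are projective--injective. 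This exhibits a copresentation of $\Gamma$ by projective--injectives, i.e.\ $\domdim\Gamma\geq 2$.

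Third, for the inverse assignment I would start from $\Gamma$ with $\domdim\Gamma\geq 2$, so that the first two terms $I^0,I^1$ of the minimal injective coresolution of $\Gamma$ are projective--injective and $I^0$ is the injective hull of $\Gamma$. Setting $\Lambda:=\End_\Gamma(I^0)^{\op}$ and regarding $M:=I^0$ as a $\Lambda$-module via its canonical bimodule structure, I would verify that $M$ is a generator--cogenerator of $\mmod\Lambda$; here one uses that every indecomposable projective--injective $\Gamma$-module already occurs as a summand of $I^0$ (so in particular $I^1\in\add I^0$), together with the projectivity and injectivity of $I^0$ over $\Gamma$.

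The crux, and the step I expect to be the main obstacle, is to show that the two constructions are mutually inverse, which comes down to the \emph{double-centralizer} (reflexivity) isomorphism $\Gamma\cong\End_\Lambda(M)^{\op}$. The decisive input is exactly the copresentation $0\to\Gamma\to I^0\to I^1$ with $I^0,I^1\in\add I^0=\add M$ guaranteed by $\domdim\Gamma\geq 2$: applying $\Hom_\Gamma(-,I^0)$ to this sequence lands in $\proj\Lambda$, and a left-exact double-dualisation argument then forces the canonical evaluation map $\Gamma\to\End_\Lambda(M)^{\op}$ to be an isomorphism. This is where the hypothesis on the dominant dimension is indispensable, since a length-$2$ copresentation by modules in $\add M$ is precisely what such an argument requires. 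Running the same identification through the fully faithful functor $F$ in the forward direction identifies $\End_\Gamma(I^0)^{\op}$ with $\End_\Lambda(D\Lambda)^{\op}\cong\Lambda$ up to Morita equivalence and recovers $M$ from the pair, so that the two assignments are mutually inverse bijections as claimed.
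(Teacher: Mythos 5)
The paper does not prove this theorem at all: it is a survey, and the Morita--Tachikawa correspondence is stated as a classical result (the only argument in its vicinity is the short remark that equivalent pairs $(\Lambda,M)\sim(\Lambda',M')$ have Morita equivalent algebras $\Lambda,\Lambda'$, proved by characterising $\add\Lambda$ inside $\add M$ as the objects to which every epimorphism splits). So there is no in-paper proof to compare against, and your proposal has to be judged on its own. On that basis it is essentially correct, and it is the standard proof: projectivisation $F=\Hom_\Lambda(M,-)$ giving $\add M\simeq\proj\Gamma$ and full faithfulness of $F$ (valid because $M$ is a generator, via $F$-exact $\add M$-presentations); the copresentation $0\to\Gamma\to F(J^0)\to F(J^1)$ by projective--injectives to get $\domdim\Gamma\geq 2$, where the identification $\Hom_\Lambda(M,DP)\cong D(P\otimes_\Lambda M)$ plus projectivity of $M$ as a right $\Gamma$-module (a consequence of $\Lambda\in\add M$) shows $F$ preserves injectivity; and, as you correctly isolate, M\"uller's double-centralizer theorem --- apply the exact functor $\Hom_\Gamma(-,I^0)$ to $0\to\Gamma\to I^0\to I^1$ with $I^0,I^1\in\add I^0$, obtain a projective presentation of $I^0$ over $\Lambda$, then dualise back with the left-exact $\Hom_\Lambda(-,I^0)$ --- as the one place where $\domdim\Gamma\geq 2$ (rather than $\geq 1$) is genuinely used, and as the key to both round trips.

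A few small inaccuracies, none fatal. First, in the backward direction the generator property of $I^0=\Gamma e$ over $e\Gamma e$ is automatic (Peirce decomposition $\Gamma e=e\Gamma e\oplus(1-e)\Gamma e$), and the cogenerator property follows from injectivity of ${}_\Gamma\Gamma e$ alone (split $\Gamma e$ off copies of $D\Gamma$ and apply the Schur functor $e(-)$); the condition $I^1\in\add I^0$, which you list among the ingredients there, is not what these need --- it is needed precisely for the double-centralizer step. Second, ``applying $\Hom_\Gamma(-,I^0)$ \dots lands in $\proj\Lambda$'' is loosely phrased: only the images of $I^0,I^1$ are projective; the third term is $I^0$ itself, and the point is that one obtains a projective presentation of it. Third, the final sentence ``recovers $M$ from the pair'' compresses the real argument: the returned pair has module $I^0_\Gamma$ over $\Lambda''=\End_\Gamma(I^0_\Gamma)^{\op}$, and the equivalence of pairs is obtained by identifying both $\add_\Lambda M$ and $\add_{\Lambda''}I^0_\Gamma$ with $\proj\Gamma$ via projectivisation, the latter identification requiring the double-centralizer isomorphism $\End_{\Lambda''}(I^0_\Gamma)^{\op}\cong\Gamma$; you should spell this out, since it is the only place where the forward round trip actually closes. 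Finally, as in the theorem statement itself, left/right module and $\op$ conventions are glossed over; in a complete write-up the bimodule sides of $I^0$ over $\Gamma$ and $\End_\Gamma(I^0)$ need to be fixed once and tracked.
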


Almost thirty years later, Iyama realised that Auslander correspondence,
together with it the most fundamental aspects of Auslander--Reiten theory, could
be vastly generalised. In some sense, Iyama's approach was inverse to that of
Auslander. Indeed, the class of Auslander algebras admits an ``obvious''
generalisation: a finite dimensional algebra $\Gamma$ is a \emph{$d$-Auslander
  algebra} if the inequalities
\begin{equation*}
  \gldim\Gamma\leq d+1\leq\domdim\Gamma
\end{equation*}
are satisfied.

In view of the Morita--Tachikawa correspondence, the modules which are in
correspondence with $d$-Auslander algebras must be particular kinds of
generator-cogenerators. These are the $d$-cluster-tilting modules, which we now
define.

\begin{definition}[{\cite{Iya07, Iya11, IO11}}]
  Let $\Lambda$ be a finite dimensional algebra. A $\Lambda$-module $M$ is
  \emph{$d$-cluster-tilting} if
  \begin{align*}
    \add M&=\setP{X\in\mmod\Lambda}{\forall i\in\set{1,\dots,d-1}:\Ext^i_\Lambda(X,M)=0}\\
          &=\setP{Y\in\mmod\Lambda}{\forall i\in\set{1,\dots,d-1}:\Ext^i_\Lambda(M,Y)=0}.
  \end{align*}
  An algebra $\Lambda$ is called \emph{weakly $d$-representation-finite} if
  there exists a $d$-cluster-tilting module.
\end{definition}

It is easy to see that a $1$-cluster-tilting $\Lambda$-module is precisely a
representation generator of $\mmod\Lambda$, and that every $d$-cluster-tilting
$\Lambda$-module is a generator-cogenerator.

A \emph{$d$-Auslander--Iyama pair} is a pair $(\Lambda,M)$ where $\Lambda$ is a
finite dimensional algebra and $M$ is a $d$-cluster-tilting module. Two
$d$-Auslander--Iyama pairs $(\Lambda,M)$ and $(\Lambda',M')$ are
\emph{equivalent} if $\add M$ and $\add M'$ are equivalent. The following
theorem appeared originally as \cite[Theorem 0.2]{Iya07}.

\begin{theorem}[$d$-dimensional Auslander--Iyama correspondence]
  There is a one-to-one correspondence between equivalence classes of
  $d$-Aus\-lan\-der--Iyama pairs and Morita equivalence classes of $d$-Auslander
  algebras. The correspondence associates to a $d$-Auslander--Iyama pair
  $(\Lambda,M)$ the $d$-Auslander algebra $\End_\Lambda(M)^{\op}$, and in the
  inverse direction to a $d$-Auslander algebra $\Gamma$ the pair
  $(\End_\Gamma(I^0)^{\op},I^0)$ where $I^0$ is the injective hull of the
  regular representation of $\Gamma$.
\end{theorem}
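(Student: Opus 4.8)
The plan is to deduce the statement from the Morita--Tachikawa correspondence by refining it along homological lines. Since every $d$-cluster-tilting module is a generator-cogenerator, every $d$-Auslander--Iyama pair is in particular a Morita--Tachikawa pair; and since every $d$-Auslander algebra $\Gamma$ satisfies $\domdim\Gamma\geq d+1\geq 2$, every $d$-Auslander algebra lies in the image of the Morita--Tachikawa correspondence. Moreover two pairs are equivalent as $d$-Auslander--Iyama pairs exactly when they are equivalent as Morita--Tachikawa pairs, and the assignments $(\Lambda,M)\mapsto\End_\Lambda(M)^{\op}$ and $\Gamma\mapsto(\End_\Gamma(I^0)^{\op},I^0)$ are literally the Morita--Tachikawa assignments. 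Consequently the whole theorem reduces to a single homological equivalence: for a Morita--Tachikawa pair $(\Lambda,M)$ with $\Gamma=\End_\Lambda(M)^{\op}$, the module $M$ is $d$-cluster-tilting if and only if $\Gamma$ is a $d$-Auslander algebra. Granting this, well-definedness and mutual inverseness are inherited from Morita--Tachikawa; in particular, applying the implication $\Gamma\text{ is }d\text{-Auslander}\Rightarrow M\text{ is }d\text{-cluster-tilting}$ to $\Gamma\cong\End_\Lambda(I^0)^{\op}$ shows that the reverse assignment does land among $d$-cluster-tilting modules.

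The homological dictionary is set up through the functor $F:=\Hom_\Lambda(M,-)\colon\mmod\Lambda\to\mmod\Gamma$, which restricts to an equivalence $\add M\xrightarrow{\sim}\proj\Gamma$ and, because $M$ is a cogenerator, carries the injective $\Lambda$-modules to the projective--injective $\Gamma$-modules. Applying $F$ to a minimal injective coresolution $0\to M\to I^0\to I^1\to\cdots$ of $M$ yields a complex $0\to\Gamma\to F(I^0)\to F(I^1)\to\cdots$ of $\Gamma$-modules with projective--injective terms whose cohomology in degree $i$ is $\Ext^i_\Lambda(M,M)$; hence $\domdim\Gamma\geq d+1$ is equivalent to the self-orthogonality $\Ext^i_\Lambda(M,M)=0$ for $1\leq i\leq d-1$, which is M\"uller's criterion. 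In particular $\add M$ is then contained in both Ext-orthogonal subcategories appearing in the definition of $d$-cluster-tilting, and the remaining task is to produce the reverse inclusions, for which the bound $\gldim\Gamma\leq d+1$ is responsible.

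The main obstacle is precisely these reverse inclusions. Given $X\in\mmod\Lambda$ with $\Ext^i_\Lambda(X,M)=0$ for $1\leq i\leq d-1$, I would build an $\add M$-resolution of $X$ by iterated right approximations and transport it under $F$ to a projective resolution of $F(X)$ over $\Gamma$; the bound $\gldim\Gamma\leq d+1$ then forces this resolution to terminate, and a dimension-shift argument combined with the self-orthogonality above shows that the relevant truncation splits, so that $X\in\add M$. The dual argument, using left $\add M$-approximations together with the injective structure, handles the condition $\Ext^i_\Lambda(M,X)=0$. The delicate points are the bookkeeping of minimality under $F$ and the verification that the numerical bound $\gldim\Gamma\leq d+1$ is sharp enough to push the relevant syzygy into $\add M$, rather than merely bounding its projective dimension. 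Once both inclusions are in place, $M$ is $d$-cluster-tilting; conversely, running the same computations backwards shows that a $d$-cluster-tilting $M$ forces $\gldim\Gamma\leq d+1\leq\domdim\Gamma$. This completes the homological equivalence and, with it, the theorem.
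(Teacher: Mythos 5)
Two things first. The paper itself contains no proof of this theorem: it is a survey statement quoted from Iyama's work, so your proposal can only be measured against the standard argument and against the paper's framing. On that score your opening reduction is exactly right and is precisely how the paper positions the result: since $d$-cluster-tilting modules are generator-cogenerators and $d$-Auslander algebras satisfy $\domdim\Gamma\geq d+1\geq 2$, the whole theorem collapses, via the Morita--Tachikawa correspondence, to the single equivalence ``$M$ is $d$-cluster-tilting $\iff$ $\End_\Lambda(M)^{\op}$ is a $d$-Auslander algebra''. Your use of M\"uller's criterion, $\domdim\Gamma\geq d+1\iff\Ext^i_\Lambda(M,M)=0$ for $1\leq i\leq d-1$, is also the correct key lemma for the dominant-dimension half, and the direction ``$M$ $d$-cluster-tilting $\Rightarrow\gldim\Gamma\leq d+1$'' (which you only gesture at) is indeed standard: every $X\in\mmod\Lambda$ admits an $\add M$-resolution of length $d-1$ by iterated approximations, and splicing gives $\projdim_\Gamma N\leq d+1$ for every $\Gamma$-module $N$.

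The genuine gap is in your mechanism for the hard direction, that $\gldim\Gamma\leq d+1$ forces the two $\Ext$-orthogonals into $\add M$. Your plan is: terminate the approximation resolution using the global dimension bound, obtaining $0\to M_{d+1}\to\cdots\to M_0\to X\to 0$ with all $M_i\in\add M$, then split the relevant truncation by dimension shifting against self-orthogonality. This fails for a concrete numerical reason. To conclude $X\in\add M$ you must split $0\to X_1\to M_0\to X\to 0$, i.e.\ show $\Ext^1_\Lambda(X,X_1)=0$; shifting along the resolution gives $\Ext^1_\Lambda(X,X_1)\cong\Ext^2_\Lambda(X,X_2)\cong\cdots\cong\Ext^{d-1}_\Lambda(X,X_{d-1})$ using $\Ext^i_\Lambda(X,M)=0$ for $1\leq i\leq d-1$, but the next shifts require $\Ext^d_\Lambda(X,M)=0$ and $\Ext^{d+1}_\Lambda(X,M)=0$, which are available neither from the hypothesis on $X$ nor from M\"uller's criterion --- both stop at degree $d-1$, and indeed $\Ext^d_\Lambda(M,M)$ is typically nonzero (that is where higher Auslander--Reiten theory lives). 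Shifting from the other end of the resolution runs into the same wall. The standard (Iyama's) argument avoids splitting entirely by using \emph{copresentations} instead of resolutions: for $X$ with $\Ext^i_\Lambda(M,X)=0$ for $1\leq i\leq d-1$, apply $F=\Hom_\Lambda(M,-)$ to a minimal injective coresolution of $X$; since $M$ is a cogenerator, each $F(I^j)$ is projective over $\Gamma$, and the $\Ext$-vanishing makes $0\to F(X)\to F(I^0)\to\cdots\to F(I^{d-1})\to F(I^d)$ exact, exhibiting $F(X)$ as a $(d+1)$-st syzygy of $C:=\Coker\bigl(F(I^{d-1})\to F(I^d)\bigr)$. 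Hence $\Ext^1_\Gamma(F(X),-)\cong\Ext^{d+2}_\Gamma(C,-)=0$ by the global dimension bound, so $F(X)$ is projective and $X\in\add M$ by full faithfulness of $F$. The condition $\Ext^i_\Lambda(X,M)=0$ is handled symmetrically by applying $\Hom_\Lambda(-,M)$ to a projective resolution of $X$ (projectives lie in $\add M$ because $M$ is a generator) and using $\gldim\Gamma^{\op}=\gldim\Gamma$. In short, the global dimension bound must be applied to the cokernel of a copresentation, realizing $F(X)$ itself as a high syzygy; your weaker use of it, $\projdim_\Gamma F(X)\leq d+1$ plus a splitting argument, cannot close the proof.
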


Recently, there have been further generalisations of this correspondence (and
thus instances of Morita--Tachikawa correspondence) by Iyama--Solberg
\cite{IS16} and Marczinzik \cite{Mar17} replacing $\gldim \Gamma$ by
$\injdim_\Gamma \Gamma$ and $\findim \Gamma$, respectively.

\subsection{Higher Auslander--Reiten theory}

Following the steps in the development of Auslander--Reiten theory, the
$d$-di\-men\-sio\-nal Auslander--Iyama correspondence served as the starting
point for a $(d+1)$-dimensional Auslander--Reiten theory. The minimal projective
resolutions of simple modules of projective dimension $d+1$ over $d$-Auslander
algebras give rise to the prototypical $d$-almost-split sequences. The only
caveat is that, in order to obtain analogous properties to those in classical
Auslander--Reiten theory, one needs to restrict to modules in a
$d$-cluster-tilting subcategory, a generalisation of the category $\add M$ for a
$d$-cluster-tilting module $M$.

\begin{definition}[{{\cite[(2.4)]{Iya07}, \cite[Definition 2.2]{Iya07b}},
  \cite[Definition 3.14]{Jas16}}]
  \label{def:cluster-tilting}
  Let
  $\mathcal{A}$ be an abelian category. A generating-cogenerating functorially
  finite (full) subcategory $\mathcal{C}$ of $\mathcal{A}$ is a
  \emph{$d$-cluster-tilting subcategory}\footnote{Note that $d$-cluster-tilting
    subcategories where originally called ``maximal $(d-1)$-orthogonal
    subcategories'' in \cite{Iya07}.} if
  \begin{align*}
    \mathcal{C}&=\setP{X\in \mathcal{A}}{\forall i\in\set{1,\dots,d-1}:\Ext^i_\mathcal{A}(X,\mathcal{C})=0}\\
               &=\setP{Y\in \mathcal{A}}{\forall i\in\set{1,\dots,d-1}:\Ext^i_{\mathcal{A}}(\mathcal{C},Y)=0}.
  \end{align*}
  An object $C\in \mathcal{A}$ is a \emph{$d$-cluster-tilting object} if $\add
  C$ is a $d$-cluster-tilting subcategory of $\mathcal{A}$.
\end{definition}

The authors are not aware of any examples of abelian categories $\mathcal{A}$ having a subcategory $\mathcal{C}$ which satisfies the assumptions of a $d$-cluster-tilting subcategory except for being generating-cogenerating. However, this assumption is needed in the proofs of even the basic statements of the theory in \cite{Jas16}. If $\mathcal{A}$ has enough projectives and enough injectives, this condition is automatic as $\mathcal{C}$ always contains all the projective and all the injective objects of $\mathcal{A}$.

Let $\mathcal{C}$ be a $d$-cluster-tilting subcategory of an abelian category
$\mathcal{A}$. Then, a \emph{$d$-almost split sequence} is an exact sequence
\[0\to C_0\to C_1\to \dots\to C_d\to C_{d+1}\to 0\] with $C_i\in \mathcal{C}$
for all $i$ and $C_0$ indecomposable such that every non-split epimorphism
$C'\to C_{d+1}$ for some $C'\in \mathcal{C}$ factors through the map $C_d\to
C_{d+1}$; moreover, one requires that for each $i\in\set{0,1,\dots,d}$ the
morphisms $C_i\to C_{i+1}$ lies in the Jacobson radical of $\mmod\Lambda$. In
this case, $C_0\cong \tau_d(C_{d+1})$ where
\begin{equation*}
  \tau_d(M):=\tau\Omega^{d-1}(M)
\end{equation*}
is the $d$-dimensional replacement of the classical Auslander--Reiten
translation $\tau$. Here, $\Omega$ denotes Heller's syzygy functor.

In `classical' representation theory, representation-finite hereditary algebras
were among the first algebras whose Auslander--Reiten theory was systematically
investigated, leading to fruitful results, see for example
\cite{ASS06,SS07a,SS07b}. Modelled on this, weakly $d$-representation finite
algebras of small global dimension were among the first to be investigated.

\begin{definition}
  A finite dimensional algebra $\Lambda$ is \emph{$d$-representation-finite
    $d$-hereditary} if it is weakly $d$-representation-finite and $\gldim
  \Lambda\leq d$.
\end{definition}

As an exercise, the reader can verify that $d$ is the smallest possible global
dimension for a non-semisimple weakly $d$-representation-finite algebra. Thus, a
$d$-representation-finite $d$-hereditary algebra has global dimension either $0$
or $d$. Much of the research in higher Auslander--Reiten theory has focused on
the class of $d$-representation-finite $d$-hereditary algebras, see for example
\cite{IO11,HI11,HI11b, OT12, IO13}. In the beginning of Section
\ref{sec:higher_nakayama} we recall Iyama's construction of the higher Auslander
algebras of type $\mathbb{A}$, which are a particularly important class of
$d$-representation-finite $d$-hereditary algebras.

There is an analogous result to the fact that for a representation-finite
hereditary algebra all indecomposable modules can be constructed from the
indecomposable injectives by iterative application of the Auslander--Reiten
translation.

\begin{theorem}[{\cite[Proposition 1.3(b)]{Iya11}}]
  Let $\Lambda$ be a $d$-representation-finite $d$-hereditary algebra with $n$ simple modules $S_1,\dots,S_n$ up to isomorphism. Then, the
  $\Lambda$-module
  \begin{equation*}
    M=\bigoplus_{i=1}^n\bigoplus_{j\geq 0} \tau_d^j I_i,
  \end{equation*}
  where $I_i$ denotes the indecomposable injective $\Lambda$-module with socle
  $S_i$, is the (up to isomorphism) unique basic $d$-cluster-tilting
  $\Lambda$-module.
\end{theorem}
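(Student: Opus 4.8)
The plan is to establish that $M$ as defined is $d$-cluster-tilting and that every basic $d$-cluster-tilting module must coincide with it, splitting the argument into existence and uniqueness. Since $\Lambda$ is $d$-representation-finite $d$-hereditary, it is weakly $d$-representation-finite, so a $d$-cluster-tilting module exists; the content is to identify it with the explicit $\tau_d$-orbit sum. The guiding heuristic, stated just before the theorem, is the classical analogue: over a representation-finite hereditary algebra every indecomposable arises from an injective by iterating $\tau$. I would aim to reproduce this picture one dimension up, using that $\gldim\Lambda \le d$ makes $\tau_d = \tau\Omega^{d-1}$ behave like an honest (partial) equivalence on the $d$-cluster-tilting subcategory.

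\emph{Existence.} First I would fix a basic $d$-cluster-tilting module $M_0$, whose existence is guaranteed, and let $\mathcal{C} = \add M_0$ be the associated $d$-cluster-tilting subcategory. Because $\mathcal{C}$ is generating-cogenerating, it contains all indecomposable injectives $I_1,\dots,I_n$. The key structural input is that on a $d$-cluster-tilting subcategory of a $d$-hereditary algebra, $\tau_d$ is defined on every non-projective indecomposable of $\mathcal{C}$ and lands again in $\mathcal{C}$, fitting into a $d$-almost-split sequence $0 \to \tau_d C_{d+1} \to C_1 \to \cdots \to C_d \to C_{d+1} \to 0$ with all terms in $\mathcal{C}$; this is exactly the higher Auslander--Reiten theory recalled above. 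I would then argue that every indecomposable object of $\mathcal{C}$ is reachable from some $I_i$ by repeatedly applying $\tau_d$. Concretely, running $\tau_d^{-1}$ (the inverse higher translate) starting from the injectives exhausts the non-projectives, while dually the projectives are recovered as the top of the chain; since $\mathcal{C}$ has only finitely many indecomposables, each orbit $\set{\tau_d^j I_i}_{j\ge 0}$ is finite and their union is all of $\ind\mathcal{C}$. This identifies $\add M_0$ with $\add M$, hence $M_0 \cong M$ up to multiplicity, giving that $M$ itself is $d$-cluster-tilting.

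\emph{Uniqueness.} For uniqueness I would observe that the preceding argument did not use any special feature of $M_0$ beyond its being $d$-cluster-tilting: the set of indecomposable summands of \emph{any} basic $d$-cluster-tilting module must be a generating-cogenerating subcategory, hence contain all $I_i$, and must be closed under the (defined) action of $\tau_d$. Thus $\ind(\add M')$ contains $\set{\tau_d^j I_i}$ for every $i$ and every $j\ge 0$; conversely the coverage argument shows these are \emph{all} the indecomposables of any such subcategory. Therefore any two basic $d$-cluster-tilting modules have the same indecomposable summands and are isomorphic, which is the uniqueness assertion.

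\emph{Main obstacle.} The delicate point is justifying that $\tau_d$ is everywhere defined on the non-projective indecomposables of $\mathcal{C}$ and preserves $\mathcal{C}$, and that the $\tau_d$-orbits of the injectives sweep out \emph{all} of $\ind\mathcal{C}$ without gaps or collisions. Unlike the classical hereditary case, here $\Omega^{d-1}$ intervenes, so one must use $\gldim\Lambda\le d$ to control syzygies and ensure $\tau_d$ interacts well with the $d$-almost-split sequences; this is precisely where the $d$-hereditary hypothesis is essential, since it guarantees the relevant projective and injective dimensions are small enough for the higher Auslander--Reiten formalism of \cite{Iya07,Iya11} to apply. I expect the cleanest route is to invoke the existence and uniqueness of $d$-almost-split sequences ending at each non-injective indecomposable of $\mathcal{C}$ to set up a bijection between non-injective and non-projective indecomposables via $\tau_d$, and then a finiteness/connectedness argument to conclude the orbits cover everything.
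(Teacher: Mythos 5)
The paper itself offers no proof of this statement; it is quoted directly from Iyama (\cite[Proposition 1.3(b)]{Iya11}), so the comparison can only be with Iyama's argument. Your skeleton --- any basic $d$-cluster-tilting module $M_0$ contains all the indecomposable injectives because $\add M_0$ is generating-cogenerating, $\add M_0$ is stable under $\tau_d$, hence $\add M\subseteq\add M_0$, and then one proves the reverse inclusion --- is the correct one, and your uniqueness paragraph does follow formally once the coverage claim is in place.

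The genuine gap is the coverage claim itself, which you correctly flag as the ``main obstacle'' but then dispose of with an argument that does not work. A bijection between non-projective and non-injective indecomposables of $\add M_0$ induced by $\tau_d$, together with finiteness of $\ind(\add M_0)$, does \emph{not} imply that the $\tau_d$-orbits of the injectives sweep out everything: both facts are perfectly consistent with a $\tau_d$-periodic orbit disjoint from the projectives and the injectives. (For $d=1$ this is exactly the situation of regular modules; $\tau$ restricts to a bijection between non-projectives and non-injectives over \emph{any} Artin algebra, so the bijection alone cannot be the decisive input.) What must be excluded is $\tau_d$-periodicity of indecomposables in $\add M_0$, and this is where $\gldim\Lambda\le d$ enters in an essential, non-formal way. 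Iyama's route is to pass to $\Db(\mmod\Lambda)$: on the $d$-cluster-tilting subcategory $\tau_d^{-}$ is computed by $\nu_d^{-1}=\nu^{-1}[d]$, and because $\gldim\Lambda\le d$ the complexes $\nu_d^{-j}X$ for $j\gg 0$ have cohomology concentrated in strictly positive degrees and so cannot all be modules; hence the sequence $X,\tau_d^{-}X,\tau_d^{-2}X,\dots$ terminates, and it can only terminate at an injective, which gives $X\cong\tau_d^{j}I_i$ for some $i$ and $j\ge 0$. Some argument of this kind (or an equivalent exclusion of periodic orbits) is indispensable and is absent from your proposal; the ``finiteness/connectedness argument'' you defer to is precisely the content that needs proving.
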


Another nice feature of $d$-representation-finite $d$-hereditary algebras is
that their derived categories also have a $d$-cluster-tilting subcategory. Note that $d$-cluster-tilting subcategories of triangulated categories
can be defined in an obvious way (omitting the condition of being
generating-cogenerating) with the obvious generalisation of almost split
triangles to almost split $d$-angles.

For a finite dimensional algebra $\Lambda$, set
\begin{equation*}
  \mathcal{U}(\Lambda):=\add\setP{\nu_d^i \Lambda}{i\in \mathbb{Z}},
\end{equation*}
where $\nu=D\Lambda\otimes_\Lambda^{\mathbb{L}}-$ is the derived Nakayama
functor and $\nu_d:=\nu[-d]$. Thus, $\mathcal{U}(\Lambda)$ is a full subcategory
of the bounded derived category $D^b(\mmod \Lambda)$.
  
\begin{theorem}
  Let $\Lambda$ be a $d$-representation-finite $d$-hereditary algebra. Then,
  $\mathcal{U}(\Lambda)$ is a $d$-cluster-tilting subcategory of $D^b(\mmod
  \Lambda)$.
\end{theorem}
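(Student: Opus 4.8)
The plan is to transport the $d$-cluster-tilting property of the module $M$ from the previous theorem across the autoequivalence $\nu_d$, exploiting the layered structure that $\gldim\Lambda\leq d$ imposes on $\mathcal{D}:=\Db(\mmod\Lambda)$. First I would record that $\nu_d$ is an autoequivalence of $\mathcal{D}$: since $\gldim\Lambda<\infty$, the derived Nakayama functor $\nu=D\Lambda\otimes_\Lambda^{\mathbb{L}}-$ is a Serre functor on $\mathcal{D}$ and hence an equivalence, and composing with the shift $[-d]$ keeps it one. In particular $\mathcal{U}(\Lambda)$ is stable under $\nu_d$ and $\nu_d^{-1}$, and for all $a,b\in\mathbb{Z}$ one has $\Hom_{\mathcal{D}}(\nu_d^a\Lambda,\nu_d^b\Lambda[i])\cong\Hom_{\mathcal{D}}(\Lambda,\nu_d^{b-a}\Lambda[i])$, reducing every $\Ext$-computation inside $\mathcal{U}(\Lambda)$ to one against the free module $\Lambda$.

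The second step identifies the objects in the orbit. Using $I_i=\nu P_i$ together with the $d$-hereditary hypothesis, I would show that each $\nu_d^{i}\Lambda$ is concentrated in a single cohomological degree and that, running over $i\in\mathbb{Z}$, the orbit recovers up to shift exactly the injectives $I_i$ and their iterated $d$-translates $\tau_d^jI_i$, that is, the indecomposable summands of $M$; here one uses that on a $d$-hereditary algebra the translate $\tau_d=\tau\Omega^{d-1}$ is computed by a single surviving cohomology of the Nakayama twist, all intermediate cohomologies vanishing. This yields the identification $\mathcal{U}(\Lambda)\cap\mmod\Lambda=\add M$ and shows that $\nu_d$ permutes the indecomposable objects of $\mathcal{U}(\Lambda)$, so that every indecomposable of $\mathcal{U}(\Lambda)$ is a shift of a module in $\add M$.

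With these preliminaries the self-orthogonality half of the definition is direct. By the $\Hom$-reduction above it suffices to check $\Hom_{\mathcal{D}}(\Lambda,\nu_d^{k}\Lambda[i])=0$ for $1\leq i\leq d-1$ and all $k$; since $\Lambda$ is projective this group is the $i$-th cohomology of $\nu_d^{k}\Lambda$, which vanishes in this range because $\nu_d^{k}\Lambda$ is concentrated in one degree and consecutive degrees in the orbit differ by exactly $d$---precisely the point where $\gldim\Lambda\leq d$ enters. Hence $\mathcal{U}(\Lambda)$ is contained in both orthogonal subcategories appearing in the definition of a $d$-cluster-tilting subcategory of $\mathcal{D}$.

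The main obstacle is the reverse inclusion (maximality): if $X\in\mathcal{D}$ satisfies $\Ext^i_{\mathcal{D}}(X,\mathcal{U}(\Lambda))=0$ for $1\leq i\leq d-1$, then $X\in\mathcal{U}(\Lambda)$. My strategy is to descend to the module category. Applying a suitable power of $\nu_d$ I may assume that a chosen nonzero cohomology of $X$ sits in degree $0$; the orthogonality hypothesis, combined with the single-degree description of the orbit from the second step, then translates into the vanishing conditions $\Ext^i_\Lambda(X^0,M)=0=\Ext^i_\Lambda(M,X^0)$ for the degree-zero cohomology $X^0$ and $1\leq i\leq d-1$. Invoking the characterization of $\add M$ as a $d$-cluster-tilting subcategory of $\mmod\Lambda$ forces $X^0\in\add M$, and an induction on the cohomological width of $X$---splitting off the extremal cohomology as a shift of a module in $\add M$ and using that such shifts lie in $\mathcal{U}(\Lambda)$---shows $X\in\mathcal{U}(\Lambda)$. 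The delicate points are the bookkeeping of degree shifts under $\nu_d$ and the verification that the extremal cohomology genuinely splits off as a direct summand in $\mathcal{D}$, which is where the rigidity established in the third step is needed.
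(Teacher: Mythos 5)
The paper itself states this theorem without proof (it is Iyama's result, \cite[Theorem~1.23]{Iya11}), and your outline does follow the standard route: identify $\mathcal{U}(\Lambda)$ with the additive hull of $d$-fold shifts of $\add M$, check rigidity, then prove maximality by d\'evissage on cohomology using $\gldim\Lambda\leq d$. However, your second step --- which is where the entire difficulty of the theorem lives --- is both misstated and unjustified. First, the claim that each $\nu_d^{i}\Lambda$ is concentrated in a single cohomological degree is false. Take $\Lambda=A_2^{(2)}$, the Auslander algebra of $k(1\to 2)$, i.e.\ the quiver $1\to 2\to 3$ bound by the length-two path; its $2$-cluster-tilting module is $M=P_1\oplus P_2\oplus P_3\oplus S_1$. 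One computes $\nu_2\Lambda\cong (S_1\oplus P_1\oplus P_2)[-2]$ but
\begin{equation*}
  \nu_2^2\Lambda\;\cong\;P_3[-2]\oplus S_1[-4]\oplus P_1[-4],
\end{equation*}
which has cohomology in degrees $2$ and $4$: the orbit spreads out because the $\tau_2$-orbits of the injectives have different lengths. The correct statement, which is what the theorem needs, is summand-wise: every indecomposable summand of $\nu_d^i\Lambda$ is a stalk complex sitting in a degree divisible by $d$, and these summands are exactly the shifts $N[jd]$ of summands $N$ of $M$. That corrected statement still makes your rigidity computation work.

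Second, and more seriously, even the corrected concentration statement does not follow from what you invoke. For a module $X$ one has $H^i(\nu_d X)\cong D\Ext^{d-i}_\Lambda(X,\Lambda)$ for $0\leq i\leq d$. The \emph{intermediate} cohomologies ($0<i<d$) indeed vanish for $X\in\add M$ by cluster-tilting rigidity, as you say; but the extremal one, $H^d(\nu_d X)\cong D\Hom_\Lambda(X,\Lambda)$, is not intermediate, and its vanishing for non-projective indecomposable $X\in\add M$ is precisely the nontrivial point where $d$-representation-finiteness must be used. Note that in this paper ``$d$-hereditary'' means only $\gldim\Lambda\leq d$, and for a general algebra of global dimension $\leq d$ nothing prevents a non-projective module from mapping nontrivially to a projective; for $d=1$ this is excluded because submodules of projectives over hereditary algebras are projective, but no such argument exists for $d\geq 2$. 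Proving $\Hom_\Lambda(X,\Lambda)=0$ for non-projective indecomposables in $\add M$ (equivalently, $\nu_d X\cong\tau_d X$ as complexes, equivalently that $\nu_d^i\Lambda$ has cohomology only in degrees in $d\mathbb{Z}$) is the technical core of Iyama's proof, carried out by induction along the $\tau_d$-orbits using higher Auslander--Reiten duality. Since your rigidity and maximality steps both rest on this orbit identification, the proof as written has a genuine gap here. A minor additional point: in the maximality step, $\nu_d$ is not a shift functor, so ``applying a power of $\nu_d$ to place cohomology in degree $0$'' is not available; one should instead use that $\mathcal{U}(\Lambda)$ is closed under $[d]$ --- again a consequence of the orbit identification --- to force all cohomology of $X$ into degrees in $d\mathbb{Z}$ before splitting off the extremal cohomology.
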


Note that an analogous construction for more general weakly
$d$-representation-finite algebra is not known and a naive extension is even not
possible, see the authors' note \cite{JK16b}.

\section{Higher Nakayama algebras}\label{sec:higher_nakayama}

In this section we recall the construction of higher Nakayama algebras from the
authors' article \cite{JK16}. For this, we introduce a certain universal
category whose importance in higher Auslander--Reiten theory is already
mentioned by Iyama in \cite[Introduction]{Iya11}.

Every partially ordered $(P,\leq)$ has an associated $\mathbbm{k}$-category
$A(P)$ called the \emph{incidence $\mathbbm{k}$-category of the poset $P$},
whose objects are the elements of $P$ and with morphisms
\begin{equation*}
  A(P)(x,y):=\begin{cases}
    \mathbbm{k}f_{yx}&\text{if } x\leq y,\\
    0&\text{otherwise},
  \end{cases}
\end{equation*}
where $f_{yx}$ denotes a basis vector of a corresponding one-dimensional space.
The composition in $A(P)$ induced by $f_{zy}\circ f_{yx}:=f_{zx}$. If the set
$P$ is finite, this is just the incidence algebra of the poset $(P,\leq)$.

\begin{definition}
  Let $\leq$ be the product order on $\mathbb{Z}^d$, i.e. $\alpha\leq \beta$ if
  and only if $\alpha_i\leq \beta_i$ for all $i\in \set{1,\dots,d}$. Let
  $A^{(d)}:=A(\mathbb{Z}^d,\leq)$.
\end{definition}

The $\mathbbm{k}$-category $A^{(d)}$ can be explicitly described as follows. Let
$Q^{(d)}$ be the quiver with vertex set $\mathbb{Z}^d$ and arrows
\begin{equation*}
  a_i=a_i(\alpha)\colon \alpha\to\alpha+e_i
\end{equation*}
where $i\in\set{1,\dots,d}$. Let $I$ be the two-sided ideal of the path category
$\mathbbm{k}Q^{(d)}$ of $Q^{(d)}$ generated by all commutativity relations
\begin{equation*}
  \setP{a_ia_j-a_ja_i}{1\leq i<j\leq d}.
\end{equation*}
With these conventions, $A^{(d)}$ is isomorphic to the quotient category
$\mathbbm{k}Q^{(d)}/I$. From both descriptions it is clear that $A^{(1)}$ is
isomorphic to the path category of $\mathbb{A}_\infty^\infty$ with linear
orientation.

Note that $A^{(d)}$ is naturally a graded $\mathbbm{k}$-category with respect to
path length. It is elementary to verify that $A^{(d)}$ is the universal covering
of the polynomial ring in $d$ variables in the sense of \cite{Gre83} (see also \cite{MVdlP83}), that it is of global dimension $d$ and
a Koszul category in the sense of \cite{MOS09}.

In the sequel we heavily use a certain automorphism $\varphi_d$ on $A^{(d)}$
(and idempotent quotients thereof) defined by
\begin{align*}
  \varphi_d(\mu_1,\dots,\mu_d)=(\mu_1-1,\dots,\mu_d-1),\\
  \varphi_d(a_i(\mu_1,\dots,\mu_d))=a_i(\mu_1-1,\dots,\mu_d-1).
\end{align*}

\subsection{The universal $d$-Nakayama category}

In this subsection, we introduce another category, a certain idempotent quotient of $A^{(d)}$, which serves as the universal higher Nakayama category, much like the path
category of $\mathbb{A}_\infty^\infty$ serves as the universal Nakayama
category. Set
\[\mathbf{os}^{(d)}:=\setP{(\mu_1,\dots,\mu_d)\in \mathbb{Z}^{d}}{\mu_1\geq
    \mu_2\geq \dots\geq \mu_d}.\]

\begin{definition}
  For $d\geq 2$, the idempotent quotient $A^{(d)}_\infty$ of $A^{(d)}$ by the
  objects not in $\mathbf{os}^{(d)}$ is called the \emph{universal $d$-Nakayama
    category}.
\end{definition}

Note that $A_\infty^{(d)}$ is different from the incidence category of
$\mathbf{os}^{(d)}$ as there are certain zero relations at the `boundary' of
$A^{(d)}_\infty$. Furthermore observe that $A_\infty^{(2)}$ is the mesh category
of a $\mathbb{Z}\mathbb{A}_\infty$-component, one of the most typical
Auslander--Reiten components in `classical' Auslander--Reiten theory. This is
why we chose to give them the similar name $A_\infty^{(d)}$ as we see them as
higher analogues of this category.

\begin{definition}
  Let $\lambda\in\mathbf{os}^{(d+1)}$. The \emph{interval module} $M_{\lambda}$
  is the $A_\infty^{(d)}$-module with composition factors
  \[[M_{\lambda}\colon S_\kappa]=\begin{cases}\mathbbm{k}&\text{if $\kappa\in
        [(\lambda_2,\dots,\lambda_{d+1}),(\lambda_1,\dots,\lambda_{d})]$,}\\0&\text{else,}\end{cases}\]
  where $[(\lambda_2,\dots,\lambda_{d+1}),(\lambda_1,\dots,\lambda_{d})]$
  denotes the interval in the partial order on $\mathbf{os}^{(d)}$. The maps on
  the arrows $a_i$ are identity maps between different copies of $\mathbbm{k}$
  and $0$ elsewhere.
\end{definition}

The category $A_\infty^{(d)}$ satisfies analogous properties to being
$d$-representation-finite in the infinite setting:

\begin{theorem}[{\cite[Theorem 2.3.5]{JK16}}]
  The abelian category $\mmod A_\infty^{(d)}$ of finite dimensional
  $A_\infty^{(d)}$-modules is of global dimension $d$ and  has a $d$-cluster-tilting subcategory consisting of
  the additive hull $\mathcal{M}_\infty^{(d)}$ of all interval modules
  $M_{\lambda}$ for $\lambda\in \mathbf{os}^{(d+1)}$.

  In the derived situation,
  \[\mathcal{U}_\infty^{(d)}:=\add \setP{M_\lambda[di]}{\lambda\in
      \mathbf{os}^{(d+1)}, i\in \mathbb{Z}}\] is a $d$-cluster-tilting
  subcategory of $D^b(\mmod A_\infty^{(d)})$.
\end{theorem}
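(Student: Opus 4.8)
The plan is to prove the theorem by reducing the statement about the infinite category $A_\infty^{(d)}$ to the finite-dimensional case handled by Iyama's higher Auslander algebras of type $\mathbb{A}$, and then passing to a colimit. The key observation is that $\mathbf{os}^{(d)}$ is an increasing union of finite ``truncated'' cones, and on each such truncation the relevant idempotent quotient of $A^{(d)}_\infty$ should be Morita equivalent (or directly isomorphic) to one of Iyama's higher Auslander algebras $A^d_n$ of type $\mathbb{A}_n$, whose module categories are known to carry the unique $d$-cluster-tilting subcategory described by the earlier cited theorem of Iyama. First I would make precise the exhaustion: for each $n$, let $P_n\subseteq \mathbf{os}^{(d)}$ be the finite subposet of tuples lying in a suitable box, and let $A^{(d)}_n$ be the corresponding idempotent quotient; I would verify that $A^{(d)}_n$ is (Morita equivalent to) the $d$-dimensional Auslander algebra of $\mathbb{A}_n$ so that I may invoke that its module category has global dimension $d$ and the stated unique $d$-cluster-tilting subcategory, with the interval modules $M_\lambda$ (for $\lambda$ in the appropriate finite truncation of $\mathbf{os}^{(d+1)}$) furnishing its indecomposable objects.

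Next I would establish the global dimension claim for $\mmod A_\infty^{(d)}$. Since every finite-dimensional $A_\infty^{(d)}$-module is supported on finitely many vertices, it is pulled back from some finite truncation $A^{(d)}_n$, and projective resolutions computed there remain projective resolutions upstairs because the idempotent truncation is compatible with the passage to larger $n$. Combined with the fact that $A^{(d)}$ itself has global dimension $d$ (stated in the excerpt) and that the interval modules have explicitly computable syzygies via the combinatorics of $\mathbf{os}^{(d)}$, this would give $\gldim \mmod A_\infty^{(d)} = d$. For the $d$-cluster-tilting property of $\mathcal{M}_\infty^{(d)}$, I would check the two defining vanishing conditions of Definition \ref{def:cluster-tilting}: that a finite-dimensional module $X$ lies in $\mathcal{M}_\infty^{(d)}$ if and only if $\Ext^i(X,M_\lambda)=0$ for all $\lambda$ and all $1\leq i\leq d-1$, and symmetrically. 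Because any such $X$ and any finite collection of test objects live on a common finite truncation, each instance of the vanishing reduces to the corresponding statement for the finite Auslander algebra $A^{(d)}_n$, where it holds by Iyama's theorem; the functorial finiteness and generating-cogenerating conditions likewise follow from their finite counterparts together with the local finiteness of the support.

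For the derived statement, the plan is to transport the module-level cluster-tilting subcategory into $D^b(\mmod A_\infty^{(d)})$ using the $d$-fold shift $[d]$ and the interplay between the syzygy functor $\Omega$ and the Serre/Nakayama functor $\nu_d$, exactly as in the finite $d$-hereditary picture recalled in the excerpt where $\mathcal{U}(\Lambda)$ is a $d$-cluster-tilting subcategory generated by the orbit of $\nu_d$. Concretely, I would show that the objects $M_\lambda[di]$ close up under the triangulated analogues of the Ext-vanishing conditions, checking that $\Hom_{D^b}(M_\lambda[dj], M_\mu[di])$ can be nonzero only when $i \equiv j$ in the appropriate sense, so that the $d$-cluster-tilting conditions in $D^b$ decouple into the already-established module-category conditions. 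I expect the main obstacle to be the careful bookkeeping in this colimit/truncation argument: one must ensure that the idempotent truncations are uniformly compatible (so that Ext-groups and the cluster-tilting conditions are genuinely detected at finite level and are stable under enlarging the truncation), and that the ``boundary'' zero relations flagged in the remark after the definition of $A^{(d)}_\infty$ are correctly accounted for, since it is precisely these relations that make $A_\infty^{(d)}$ differ from the plain incidence category and that govern the syzygies of the interval modules. Handling these boundary effects so that the finite-level theorem of Iyama applies verbatim is the delicate technical heart of the proof.
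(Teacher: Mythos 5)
This survey does not actually prove the statement --- it is quoted verbatim from \cite[Theorem~2.3.5]{JK16} --- so there is no in-paper argument to compare against; I will therefore assess your proposal on its own terms and against the strategy of the cited source. Your overall idea (exhaust $\mathbf{os}^{(d)}$ by finite boxes, identify the truncations with Iyama's algebras $A_n^{(d)}$, and pass to the limit) is natural, and the finite type $\mathbb{A}$ algebras are indeed the starting point in \cite{JK16}. However, there is a genuine gap at the step ``projective resolutions computed there remain projective resolutions upstairs''. The category $\mmod A_\infty^{(d)}$ has neither enough projectives nor enough injectives: the representable functor $A_\infty^{(d)}(\mu,-)$ is supported on the set of $\nu\geq\mu$ with $\nu_{i+1}\leq\mu_i$ for all $i$, in which the first coordinate is unbounded above, so the would-be projective cover of the simple at $\mu$ is infinite dimensional (and dually for injectives). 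Consequently the indecomposable projective $A_n^{(d)}$-modules are \emph{not} projective over $A_\infty^{(d)}$ (they admit non-split epimorphisms from longer interval modules), and $\Ext$-groups over $A_n^{(d)}$ and over $A_\infty^{(d)}$ do not coincide. What survives is only that Yoneda $\Ext^i_{A_\infty^{(d)}}(X,Y)$ is the filtered colimit of the groups $\Ext^i_{A_n^{(d)}}(X,Y)$ over the truncations. That colimit description does yield $\gldim\leq d$ and the rigidity $\Ext^i(M_\lambda,M_\mu)=0$ for $1\leq i\leq d-1$, but it does \emph{not} yield the reverse inclusion $\setP{X}{\Ext^i(X,\mathcal{M}_\infty^{(d)})=0 \text{ for } 1\leq i\leq d-1}\subseteq\mathcal{M}_\infty^{(d)}$: vanishing of a filtered colimit gives no vanishing at any finite stage, so Iyama's theorem cannot be invoked over any single $A_n^{(d)}$ to conclude $X\in\add M_n^{(d)}$.

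For the same reason, functorial finiteness and the generating--cogenerating condition are genuine conditions here rather than consequences of local finiteness: for a fixed finite dimensional $X$ there are infinitely many $\lambda\in\mathbf{os}^{(d+1)}$ with $\Hom(X,M_\lambda)\neq 0$, so a left $\mathcal{M}_\infty^{(d)}$-approximation cannot be manufactured inside a single truncation without a separate uniformity argument, and one cannot use projective covers or injective hulls as a substitute. Likewise, in the derived statement the subcategory $\mathcal{U}_\infty^{(d)}$ cannot be presented as the $\nu_d$-orbit of the projectives as in the $d$-hereditary picture you cite, since there is no compact generator playing the role of $\Lambda$. These are precisely the points where the proof in \cite{JK16} must work with the explicit combinatorics of morphisms and extensions between interval modules --- constructing the approximations and the length-$d$ exact sequences by hand and controlling the boundary zero relations --- rather than black-boxing the finite case. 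You correctly flag the boundary effects as the technical heart of the matter, but as written your proposal does not resolve them, and the specific transfer mechanism it relies on is not available.
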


\subsection{Higher Nakayama algebras}

In this section, we survey on the results of the authors' paper \cite{JK16} on
the construction of higher analogues of Nakayama algebras.

\begin{definition}
  A finite dimensional algebra $\Lambda$ is called a \emph{Nakayama algebra} (or
  \emph{uniserial}) if every indecomposable projective and every indecomposable
  injective module is uniserial.
\end{definition}

It is well-known that (over an algebraically closed field), the connected
Nakayama algebras are precisely the quotients by admissible ideals of the path
algebras of the linearly oriented Dynkin quiver
\begin{equation*}
  \mathbb{A}_n\colon\begin{tikzcd}
    0\arrow{r}&1\arrow{r}&2\arrow{r}&\cdots\arrow{r}&n-1,
  \end{tikzcd}
\end{equation*}
and the linearly oriented extended Dynkin quiver
\begin{equation*}
  \begin{tikzpicture}
    \node (LABEL) at (180:3) {$\widetilde{\mathbb{A}}_{n-1}\colon$};
    \node[fill=white] (0) at (180-90:1.4) {$0$}; \node[fill=white] (1) at
    (120-90:1.4) {$1$}; \node[fill=white] (2) at (60-90:1.4) {$2$};
    \node[fill=white] (3) at (0-90:1.4) {$\cdots$}; \node[fill=white] (4) at
    (300-90:1.4) {$n-2$}; \node[fill=white] (5) at (240-90:1.4) {$n-1$}; \draw
    [->] (0) edge (1) (1) edge (2) (2) edge (3) (3) edge (4) (4) edge (5) (5)
    edge (0);
  \end{tikzpicture}
\end{equation*}

Note that the path algebra of $\mathbb{A}_n$ is an idempotent truncation of
$A_\infty^{(1)}$ while the path algebra of $\tilde{\mathbb{A}}_{n-1}$ is
obtained from $A_\infty^{(1)}$ by dividing out by the action of $\varphi_1^n$.
There are analogues of the path algebras of $\mathbb{A}_n$ and
$\tilde{\mathbb{A}}_{n-1}$, respectively. The former was already introduced in
\cite{Iya11} and studied extensively in \cite{OT12}. For $n=1$, the latter can
be seen as a higher preprojective algebra of type $\mathbb{A}_\infty$. We start
by introducing the higher analogues of the path algebra of $\mathbb{A}_n$, in
\cite{IO11} called higher Auslander algebras of type $\mathbb{A}$.

\begin{definition}
  Let $\mathbf{os}^{(d)}_n:=\setP{(\mu_1,\dots,\mu_d)\in
    \mathbf{os}^{(d)}}{n-1\geq \mu_1, \mu_d\geq 0}$. The \emph{$d$-Auslander
    algebra $A_n^{(d)}$ of type $\mathbb{A}_n$} is defined to be the (full)
  subcategory of $A_\infty^{(d)}$ on the vertices in $\mathbf{os}^{(d)}_n$.
\end{definition}

\begin{theorem}[{\cite[Theorem 1.18]{Iya11}}]\label{thm:higher_auslander_type_A}
  Let $d\geq1$. Then, $A_n^{(d)}$ is a $d$-representation-finite $d$-hereditary
  algebra. Moreover, $A_n^{(d+1)}\cong \End_{A_n^{(d)}}(M_n^{(d)})$ where
  $M_n^{(d)}$ is the unique basic $d$-cluster-tilting $A_n^{(d)}$-module. The
  indecomposable direct summands of $M_n^{(d)}$ are precisely the interval
  modules $M_{\lambda}$ for $\lambda\in \mathbf{os}^{(d+1)}_n$.
\end{theorem}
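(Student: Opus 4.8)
The plan is to prove Theorem~\ref{thm:higher_auslander_type_A} by induction on $d$, using the inductive construction $A_n^{(d+1)}\cong\End_{A_n^{(d)}}(M_n^{(d)})^{\op}$ together with the $d$-dimensional Auslander--Iyama correspondence to transport homological information up one dimension. First I would establish the base case $d=1$. Here $A_n^{(1)}$ is an idempotent truncation of $A_\infty^{(1)}$, which is the path category of linearly oriented $\mathbb{A}_\infty^\infty$; concretely $A_n^{(1)}$ is the path algebra of linearly oriented $\mathbb{A}_n$. This is hereditary (global dimension $1$) and representation-finite, hence $1$-representation-finite $1$-hereditary, and its unique basic $1$-cluster-tilting module $M_n^{(1)}$ is a representation generator, namely the direct sum of all indecomposables. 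One checks directly that the indecomposable $A_n^{(1)}$-modules are exactly the interval modules $M_\lambda$ for $\lambda\in\mathbf{os}^{(2)}_n$, matching the claimed description of the summands.

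Next I would carry out the inductive step. Assuming $A_n^{(d)}$ is $d$-representation-finite $d$-hereditary with unique basic $d$-cluster-tilting module $M_n^{(d)}$, the pair $(A_n^{(d)},M_n^{(d)})$ is a $d$-Auslander--Iyama pair, so $\Gamma:=\End_{A_n^{(d)}}(M_n^{(d)})^{\op}$ is a $d$-Auslander algebra, i.e. $\gldim\Gamma\le d+1\le\domdim\Gamma$. The crux is to identify $\Gamma$ with $A_n^{(d+1)}$. For this I would compute the Gabriel quiver with relations of $\Gamma$ directly: its vertices are the isomorphism classes of indecomposable summands of $M_n^{(d)}$, which by the inductive hypothesis are the interval modules $M_\lambda$ for $\lambda\in\mathbf{os}^{(d+1)}_n$, so the vertex set is already in bijection with $\mathbf{os}^{(d+1)}_n$, the vertex set of $A_n^{(d+1)}$. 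I would then show that the irreducible morphisms in $\add M_n^{(d)}$ are governed by the arrows $a_i$ raising a single coordinate, and that the relations coming from the radical-square structure of $\End$ are exactly the commutativity and boundary zero-relations defining $A_\infty^{(d+1)}$ restricted to $\mathbf{os}^{(d+1)}_n$. This is most cleanly done by tracking how morphisms between interval modules $M_\lambda\to M_{\lambda'}$ compose, which is where the combinatorial definition of $A_\infty^{(d)}$ (and its automorphism $\varphi_d$) pays off.

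To upgrade the $d$-Auslander property of $\Gamma$ to the full statement that $A_n^{(d+1)}$ is $(d+1)$-representation-finite $(d+1)$-hereditary, I would verify that $\gldim\Gamma=d+1$ (not merely $\le d+1$), which follows since $\Gamma$ is a $d$-Auslander algebra that is not self-injective, and that the $d$-cluster-tilting module $M_n^{(d)}$ over $A_n^{(d)}$ induces a $(d+1)$-cluster-tilting $\Gamma$-module. The latter is the content of the iterated construction in higher Auslander--Reiten theory: the module $\bigoplus_{i,j}\tau_{d+1}^j I_i$ from the theorem of Iyama quoted earlier (\cite[Proposition 1.3(b)]{Iya11}) is the unique basic $(d+1)$-cluster-tilting module, and I would identify its indecomposable summands with the interval modules $M_\lambda$ for $\lambda\in\mathbf{os}^{(d+2)}_n$ by a direct combinatorial count, matching ranks via the truncation $\mathbf{os}^{(d+2)}_n\subseteq\mathbf{os}^{(d+2)}$.

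The main obstacle I expect is the explicit identification $\Gamma\cong A_n^{(d+1)}$ as $\mathbbm{k}$-categories, i.e. matching the endomorphism algebra of the $d$-cluster-tilting module with the combinatorially-defined truncation of $A_\infty^{(d+1)}$. The difficulty is twofold: computing the dimensions of the Hom-spaces $\Hom_{A_n^{(d)}}(M_\lambda,M_{\lambda'})$ between interval modules and showing they are at most one-dimensional, realizing $\Gamma$ as a thin quotient of a path category; and then verifying that the relations are precisely the commutativity relations together with the correct boundary zero-relations, so that no spurious paths survive and none are missing. A clean way to organize this is to exploit the Koszulity of $A^{(d)}$ and the covering-theoretic description, reducing composability of morphisms between intervals to whether certain coordinate inequalities in $\mathbf{os}^{(d+1)}$ hold; once phrased this way, the relations of $\Gamma$ become a bookkeeping exercise in the product order on $\mathbb{Z}^{d+1}$. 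The remaining homological claims then follow formally from the Auslander--Iyama correspondence and the uniqueness statement for $d$-cluster-tilting modules over $d$-representation-finite $d$-hereditary algebras.
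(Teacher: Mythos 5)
Note first that the paper itself gives no proof of this statement: it is a survey and the theorem is quoted directly from Iyama \cite{Iya11} (with the description of the summands as interval modules taken from \cite{JK16}). So your proposal can only be measured against the known proofs. Your overall skeleton --- induction on $d$, base case the path algebra of linearly oriented $\mathbb{A}_n$, identification of the quiver with relations of $\End_{A_n^{(d)}}(M_n^{(d)})$ with the truncation of $A_\infty^{(d+1)}$ to $\mathbf{os}^{(d+1)}_n$ via the thin Hom-spaces between interval modules --- is indeed the shape of the actual argument, and your identification of that computation as a main obstacle is fair.

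However, there is a genuine gap at the decisive step of the induction. You argue: $(A_n^{(d)},M_n^{(d)})$ is a $d$-Auslander--Iyama pair, hence $\Gamma\cong A_n^{(d+1)}$ is a $d$-Auslander algebra with $\gldim\Gamma\leq d+1\leq\domdim\Gamma$; then you claim the existence of the $(d+1)$-cluster-tilting $\Gamma$-module follows from the iterated construction and from Iyama's Proposition 1.3(b). This is circular: Proposition 1.3(b) \emph{presupposes} that the algebra is $(d+1)$-representation-finite and only then identifies the unique basic $(d+1)$-cluster-tilting module as $\bigoplus_{i,j}\tau_{d+1}^{j}I_i$; it does not produce one. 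Nor does the Auslander--Iyama correspondence help: being a $d$-Auslander algebra does not imply being $(d+1)$-representation-finite. Indeed, the theorem quoted immediately after in the paper (Iyama's Theorem 1.19) says that the \emph{only} algebras that are simultaneously $(d+1)$-representation-finite $(d+1)$-hereditary and $d$-Auslander are the $A_n^{(d+1)}$; for instance the Auslander algebra of the path algebra of $\mathbb{D}_4$ is a $1$-Auslander algebra of global dimension $2$ that is not $2$-representation-finite. So the heart of the theorem is precisely the type-$\mathbb{A}$-specific verification that the candidate module $\bigoplus_{\lambda\in\mathbf{os}^{(d+2)}_n}M_\lambda$ satisfies both the $\Ext$-vanishing $\Ext^i_{A_n^{(d+1)}}(M_\lambda,M_\mu)=0$ for $1\leq i\leq d$ and, harder, the maximality condition that any module left- (or right-) orthogonal to it already lies in its additive closure. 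In Iyama's proof this is done via the machinery of $n$-complete algebras and the cone construction; in \cite{JK16} it is done by computing with the universal category $A^{(d)}_\infty$ and restriction along idempotents. Your proposal would need to supply this verification rather than derive it formally from the correspondence. The remaining points (the $\op$ on the endomorphism algebra, $\gldim\Gamma=d+1$ exactly) are minor.
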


The following theorem of Iyama characterises the higher Auslander algebras of
type $\mathbb{A}$ as the only finite dimensional algebras which are
$d$-representation finite $d$-hereditary and $(d-1)$-Auslander algebras at the
same time. Thus, in particular, these are the only $d$-representation-finite
$d$-hereditary algebra which can be constructed inductively as in Theorem
\ref{thm:higher_auslander_type_A}.

\begin{theorem}[{\cite[Theorem 1.19]{Iya11}}]
  Let $d\geq2$ and $\Lambda$ a finite dimensional algebra. Then, $\Lambda$ is
  $d$-representation-finite $d$-hereditary as well as a $(d-1)$-Auslander
  algebra if and only if it is Morita equivalent to $A_n^{(d)}$ where $n$ is the
  number of simple $\Lambda$-modules in the $d$-cluster-tilting subcategory of
  $\mmod \Lambda$.
\end{theorem}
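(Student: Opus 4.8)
The plan is to prove both implications, treating the backward direction as essentially formal and the forward direction by induction on $d$, with the inductive mechanism mirroring the construction in Theorem~\ref{thm:higher_auslander_type_A}. For the backward direction, suppose $\Lambda$ is Morita equivalent to $A_n^{(d)}$. Theorem~\ref{thm:higher_auslander_type_A} already gives that $A_n^{(d)}$ is $d$-representation-finite $d$-hereditary, and it exhibits $A_n^{(d)}\cong\End_{A_n^{(d-1)}}(M_n^{(d-1)})$ with $M_n^{(d-1)}$ the basic $(d-1)$-cluster-tilting $A_n^{(d-1)}$-module. By the $(d-1)$-dimensional Auslander--Iyama correspondence, the endomorphism algebra of a $(d-1)$-cluster-tilting module is a $(d-1)$-Auslander algebra; since $\gldim$ and $\domdim$ are both left--right symmetric, the property of being a $(d-1)$-Auslander algebra is invariant under passing to the opposite algebra, so $A_n^{(d)}$ itself is a $(d-1)$-Auslander algebra. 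This settles the backward direction.

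For the forward direction I would argue by induction on $d$, proving the following level-$m$ statement for all $m\geq 1$: a connected algebra is $m$-representation-finite $m$-hereditary and an $(m-1)$-Auslander algebra if and only if it is isomorphic to $A_n^{(m)}$ for some $n$. The base case $m=1$ is classical: an $m$-representation-finite $m$-hereditary algebra is a representation-finite hereditary algebra, so in the connected case the path algebra of a Dynkin quiver; being a $0$-Auslander algebra means exactly $\domdim\geq 1$, a condition which for a connected representation-finite hereditary algebra holds precisely for the path algebra of the linearly oriented $\mathbb{A}_n$, that is, for $A_n^{(1)}$. For the inductive step, given $\Lambda$ which is $d$-representation-finite $d$-hereditary and $(d-1)$-Auslander, I would use the $(d-1)$-dimensional Auslander--Iyama correspondence to write $\Lambda\cong\End_\Gamma(N)^{\op}$, where $N=I^0$ is the injective hull of the regular representation and $\Gamma=\End_\Lambda(I^0)^{\op}$, so that $N$ is a $(d-1)$-cluster-tilting $\Gamma$-module and $\Gamma$ is in particular weakly $(d-1)$-representation-finite.

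The heart of the argument is then a \emph{transfer lemma}: under the above hypotheses, $\Gamma$ is $(d-1)$-representation-finite $(d-1)$-hereditary and a $(d-2)$-Auslander algebra. Granting this, the inductive hypothesis yields $\Gamma\cong A_n^{(d-1)}$ with $n$ the number of simple modules in its $(d-1)$-cluster-tilting subcategory; the uniqueness of the basic $(d-1)$-cluster-tilting module for $(d-1)$-representation-finite $(d-1)$-hereditary algebras forces $N\cong M_n^{(d-1)}$, whence $\Lambda\cong\End_{A_n^{(d-1)}}(M_n^{(d-1)})^{\op}\cong(A_n^{(d)})^{\op}$ by Theorem~\ref{thm:higher_auslander_type_A}. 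Since the defining poset $\mathbf{os}^{(d)}_n$ admits the order-reversing symmetry $\mu\mapsto(n-1-\mu_d,\dots,n-1-\mu_1)$, we have $(A_n^{(d)})^{\op}\cong A_n^{(d)}$ and the induction closes. That the resulting $n$ agrees with the count in the theorem is checked directly: the simple modules lying in the $d$-cluster-tilting subcategory of $\mmod A_n^{(d)}$ are exactly the interval modules $M_{(c,\dots,c)}$ with $0\leq c\leq n-1$, again $n$ of them.

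The transfer lemma is the main obstacle, and it splits into two homological comparisons: bounding $\gldim\Gamma\leq d-1$ and $\domdim\Gamma\geq d-1$ from the corresponding bounds for $\Lambda=\End_\Gamma(N)^{\op}$. The equivalence $\Hom_\Gamma(N,-)\colon\add N\to\proj\Lambda$ lets one compare resolutions on the two sides, but whereas it is straightforward that a small global dimension of $\Gamma$ propagates to $\Lambda$, the reverse inference---extracting $\gldim\Gamma\leq d-1$ from $\gldim\Lambda\leq d$ together with the $(d-1)$-cluster-tilting property of $N$---is delicate and is where Iyama's structural results on $(d-1)$-cluster-tilting subcategories and higher Auslander--Reiten sequences must be invoked. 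The dominant-dimension comparison is handled similarly, by tracking how the first terms of the minimal injective coresolution of $\Gamma$ correspond, under $\Hom_\Gamma(N,-)$, to the projective-injective part of the coresolution of $\Lambda$ guaranteed by $\domdim\Lambda\geq d$.
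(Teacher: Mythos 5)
Your backward direction is essentially sound: Theorem~\ref{thm:higher_auslander_type_A} gives that $A_n^{(d)}$ is $d$-representation-finite $d$-hereditary, the $(d-1)$-dimensional Auslander--Iyama correspondence applied to the pair $(A_n^{(d-1)},M_n^{(d-1)})$ shows that $\bigl(A_n^{(d)}\bigr)^{\op}$ is a $(d-1)$-Auslander algebra, and either the left--right symmetry of $\gldim$ and $\domdim$ or the poset self-duality $\mu\mapsto(n-1-\mu_d,\dots,n-1-\mu_1)$ transfers this to $A_n^{(d)}$ itself; your count of the Schur simples $M_{(c,\dots,c)}$, $0\leq c\leq n-1$, is also correct. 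Note that the paper itself offers no proof to compare against --- it cites \cite{Iya11} --- so your argument has to stand on its own.

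It does not, because the forward direction has a genuine gap: the entire mathematical content of the theorem is concentrated in what you call the \emph{transfer lemma}, and you do not prove it. Writing $\Lambda\cong\End_\Gamma(N)^{\op}$ with $N$ a $(d-1)$-cluster-tilting $\Gamma$-module is purely formal (it is the $(d-1)$-Auslander--Iyama correspondence and uses only the $(d-1)$-Auslander hypothesis); what is not formal is the claim that $\gldim\Gamma\leq d-1$ and $\domdim\Gamma\geq d-1$ follow from $\gldim\Lambda\leq d$ together with the existence of a $d$-cluster-tilting $\Lambda$-module. Already for $d=2$ this lemma asserts: if the Auslander algebra of a representation-finite algebra $\Gamma$ admits a $2$-cluster-tilting module, then $\Gamma$ is hereditary of dominant dimension at least $1$, hence linearly oriented $\mathbb{A}_n$. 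Nothing in your sketch excludes, say, a representation-finite $\Gamma$ of global dimension $2$ whose Auslander algebra happens to admit a $2$-cluster-tilting module; ruling this out \emph{is} Iyama's theorem, so deferring it to unspecified ``structural results of Iyama'' is circular in a proof of that very theorem. In other words, your induction scaffolding (base case, uniqueness of the basic cluster-tilting module, self-opposedness of $A_n^{(d)}$, the count of simples) is all fine but carries no weight until the transfer lemma is established; as it stands, the proposal reduces the statement to an unproved lemma of the same difficulty, and gives no mechanism --- no analysis of how the $d$-cluster-tilting module for $\Lambda$ restricts the projective resolutions, projective-injectives, or the quiver of $\Gamma$ --- by which either dimension bound on $\Gamma$ could actually be derived.
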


For constructing the higher analogue of the path algebra of $\tilde{\mathbb{A}}$
note that $A_\infty^{(d)}$ is closed under $\varphi_d$. Recall that $\varphi_d$ was defined to be the quiver automorphism satisfying
\begin{align*}
  \varphi_d(\mu_1,\dots,\mu_d)=(\mu_1-1,\dots,\mu_d-1),\\
  \varphi_d(a_i(\mu_1,\dots,\mu_d))=a_i(\mu_1-1,\dots,\mu_d-1).
\end{align*}
Define the higher
analogue of the path algebra of $\tilde{\mathbb{A}}_{n-1}$ by
$\tilde{A}_{n-1}^{(d)}:=A_\infty^{(d)}/\varphi_d^n$. It is easy to see that the
images of the $A_\infty^{(d)}$-modules $M_{\lambda}$ and $M_{\mu}$ under the
pushdown functor are isomorphic if and only if $\lambda\equiv \mu\mod n\mathbb{Z}(1,1,\dots,1)$.
As the quiver of this category contains oriented cycles, we consider the
category of finite dimensional nilpotent modules instead of the category of
finite dimensional modules.

\begin{theorem}[{\cite[Theorem 2.3.4]{JK16}}]
  The category $\nil \tilde{A}^{(d)}_{n-1}$ has a $d$-cluster-tilting
  subcategory $\mathcal{T}^{(d)}_n$ whose objects are given by the images of the
  interval modules $M_{\lambda}$ where $\lambda\in \mathbf{os}^{(d+1)}_n$. Moreover, $\mathcal{T}^{(d)}_n\cong \add \tilde{A}_{n-1}^{(d+1)}$.
\end{theorem}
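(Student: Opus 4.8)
The plan is to deduce the statement from the corresponding result for the universal $d$-Nakayama category $A_\infty^{(d)}$ by means of Galois covering theory. The projection $\pi\colon A_\infty^{(d)}\to\tilde{A}_{n-1}^{(d)}=A_\infty^{(d)}/\varphi_d^n$ is a Galois covering whose group $G=\langle\varphi_d^n\rangle\cong\mathbb{Z}$ acts freely on the set of objects $\mathbf{os}^{(d)}$. First I would assemble the standard properties of the associated pushdown functor $F_\bullet\colon\mmod A_\infty^{(d)}\to\nil\tilde{A}_{n-1}^{(d)}$: it is exact, it commutes with syzygies up to the $G$-action, it identifies the $G$-orbit of each interval module with a single isomorphism class (this is the already recorded fact that $F_\bullet M_\lambda\cong F_\bullet M_\mu$ if and only if $\lambda\equiv\mu$ modulo $n\mathbb{Z}(1,\dots,1)$), and, crucially, the covering formula
\[
  \Ext^i_{\tilde{A}_{n-1}^{(d)}}(F_\bullet X,F_\bullet Y)\cong\bigoplus_{g\in G}\Ext^i_{A_\infty^{(d)}}(X,{}^gY)
\]
holds for every finite dimensional $A_\infty^{(d)}$-module $X$, where the sum is finite because $X$ is finite dimensional and $A_\infty^{(d)}$ is locally bounded. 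The same formula with $\Hom$ in place of $\Ext^i$ will be used at the end.

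Second, set $\mathcal{T}_n^{(d)}:=\add F_\bullet(\mathcal{M}_\infty^{(d)})$. Since $\varphi_d$ permutes the interval modules, the subcategory $\mathcal{M}_\infty^{(d)}$ is $G$-stable, so $\mathcal{T}_n^{(d)}$ is well defined and consists precisely of the images of the interval modules. That $\mathcal{T}_n^{(d)}$ is generating-cogenerating is immediate: the indecomposable projective and injective objects of $\nil\tilde{A}_{n-1}^{(d)}$ are the images of the indecomposable projective and injective $A_\infty^{(d)}$-modules, and the latter are interval modules, hence lie in $\mathcal{M}_\infty^{(d)}$. For the Ext-orthogonality in the range $1\leq i\leq d-1$ I would apply the covering formula to $X,Y\in\mathcal{M}_\infty^{(d)}$: every summand $\Ext^i_{A_\infty^{(d)}}(X,{}^gY)$ vanishes because ${}^gY\in\mathcal{M}_\infty^{(d)}$ and $\mathcal{M}_\infty^{(d)}$ is $d$-cluster-tilting in $\mmod A_\infty^{(d)}$. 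Functorial finiteness should likewise descend by pushing down the $\mathcal{M}_\infty^{(d)}$-approximations available upstairs.

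The step I expect to be the main obstacle is the converse inclusion establishing maximality: that any indecomposable $Z\in\nil\tilde{A}_{n-1}^{(d)}$ with $\Ext^i(\mathcal{T}_n^{(d)},Z)=0$ for $1\leq i\leq d-1$ already lies in $\mathcal{T}_n^{(d)}$, and dually. The clean part is that once $Z$ is written as a pushdown $Z\cong F_\bullet W$ of an indecomposable finite dimensional $A_\infty^{(d)}$-module $W$, the covering formula forces the summand $\Ext^i_{A_\infty^{(d)}}(X,W)$ indexed by the identity of $G$ to vanish for every $X\in\mathcal{M}_\infty^{(d)}$, so that $W\in\mathcal{M}_\infty^{(d)}$ by the $d$-cluster-tilting characterization upstairs, whence $Z\in\mathcal{T}_n^{(d)}$. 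The difficulty is thus concentrated in the lifting itself: one must show that $\pi$ is a covering of the first kind, so that the pushdown functor is dense on indecomposables and such a $Z$ indeed admits an indecomposable finite dimensional preimage. In the present situation, where the covering group $G\cong\mathbb{Z}$ is infinite, this requires a direct analysis of the supports of finite dimensional nilpotent $\tilde{A}_{n-1}^{(d)}$-modules and of the interplay between indecomposable direct summands and the $G$-action.

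Finally, for the isomorphism $\mathcal{T}_n^{(d)}\cong\add\tilde{A}_{n-1}^{(d+1)}$ I would compute the endomorphism category of $\mathcal{T}_n^{(d)}$ directly. By the universal analogue of Theorem~\ref{thm:higher_auslander_type_A} underlying the $d$-cluster-tilting subcategory $\mathcal{M}_\infty^{(d)}$, the endomorphism category of $\mathcal{M}_\infty^{(d)}$ is isomorphic to $A_\infty^{(d+1)}$, with the indecomposable interval modules corresponding to the objects $\mathbf{os}^{(d+1)}$ and with the action of $\varphi_d$ on $\mathcal{M}_\infty^{(d)}$ corresponding to $\varphi_{d+1}$ on $A_\infty^{(d+1)}$. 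Applying the Hom-version of the covering formula,
\[
  \Hom_{\tilde{A}_{n-1}^{(d)}}(F_\bullet M_\lambda,F_\bullet M_\mu)\cong\bigoplus_{g\in G}\Hom_{A_\infty^{(d)}}(M_\lambda,{}^gM_\mu),
\]
identifies the endomorphism category of $\mathcal{T}_n^{(d)}$ with the orbit category $A_\infty^{(d+1)}/\varphi_{d+1}^n=\tilde{A}_{n-1}^{(d+1)}$. Since $\mathcal{T}_n^{(d)}=\add F_\bullet(\mathcal{M}_\infty^{(d)})$ is the additive hull of these generators, the functor $\Hom_{\tilde{A}_{n-1}^{(d)}}(\mathcal{T}_n^{(d)},-)$ then yields the desired equivalence $\mathcal{T}_n^{(d)}\cong\proj\tilde{A}_{n-1}^{(d+1)}=\add\tilde{A}_{n-1}^{(d+1)}$.
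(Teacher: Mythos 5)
The survey states this theorem without proof (it is quoted from \cite{JK16}), so there is no in-paper argument to compare against; your Galois-covering strategy is certainly the natural one given that $\tilde{A}_{n-1}^{(d)}$ is defined as the orbit category $A_\infty^{(d)}/\varphi_d^n$, and it is consistent with the survey's own remarks about the pushdown functor. The problem is that the step you concede as ``the main obstacle'' --- density of the pushdown on $\nil\tilde{A}_{n-1}^{(d)}$ --- is not a technical loose end but the mathematical core of the theorem, and it cannot be obtained from the classical theory of ``coverings of the first kind'' that you appeal to. Gabriel's density theorem requires local representation-finiteness and Dowbor--Skowro\'nski's requires local support-finiteness; both fail here, and already for $d=1$ the pushdown $\mmod A_\infty^{(1)}\to\mmod\tilde{A}_{n-1}^{(1)}$ is \emph{not} dense (the non-nilpotent representations of the cyclic quiver are of the second kind), so any density argument must use nilpotency in an essential way and be built by hand. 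Worse, your supporting premise that $A_\infty^{(d)}$ is locally bounded is false: for $d=2$ one has $\Hom_{A_\infty^{(2)}}((c,c),(\nu_1,c))\neq 0$ for every $\nu_1\geq c$, so even the ``standard properties'' you assemble in step one (covering formulas for $\Hom$ and $\Ext$, preservation of indecomposability) are not off-the-shelf facts; they must be rederived from the pushdown/pull-up adjunction for finitely generated modules, with the finiteness of $\bigoplus_{g\in G}$ coming from a support argument rather than local boundedness. Since in your argument both the maximality half of the cluster-tilting property \emph{and} functorial finiteness reduce to lifting an arbitrary nilpotent module, what the proposal actually establishes is only that $\mathcal{T}_n^{(d)}$ is $\Ext$-orthogonal to itself in degrees $1,\dots,d-1$, not that it is $d$-cluster-tilting.

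There is a second outright error: the generating-cogenerating step. You claim the indecomposable projective and injective objects of $\nil\tilde{A}_{n-1}^{(d)}$ are the images of those of $A_\infty^{(d)}$, and that the latter are interval modules. Both claims fail: the representable (projective) $A_\infty^{(d)}$-modules are infinite dimensional, hence are not interval modules, and neither $\mmod A_\infty^{(d)}$ nor $\nil\tilde{A}_{n-1}^{(d)}$ has \emph{any} nonzero projective or injective objects --- every nonzero module is annihilated by some power $J^m$ of the arrow ideal, yet receives a non-split epimorphism from a direct sum of truncated representables $P/J^{m+1}P$, whose indecomposable summands are not annihilated by $J^m$. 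This is precisely the situation flagged after Definition \ref{def:cluster-tilting}: the generating-cogenerating condition is automatic only when $\mathcal{A}$ has enough projectives and injectives, which is not the case here, so it must be verified directly (for instance by showing that each $P_{\bar{\mu}}/J^mP_{\bar{\mu}}$ is a quotient of the image of a sufficiently long interval module, and dually). Finally, a minor omission: the last step tacitly uses that the pushdowns $F_\bullet M_\lambda$ are indecomposable and pairwise non-isomorphic across distinct $\varphi_d^n$-orbits; this does follow from the covering formula, since the ``winding'' endomorphisms in degrees $g\neq 0$ are nilpotent and hence radical, but it needs to be said for the identification of $\mathcal{T}_n^{(d)}$ with $\add\tilde{A}_{n-1}^{(d+1)}$ to go through.
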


The name $\mathcal{T}_n^{(d)}$ is chosen because for $d=1$,
$\mathcal{T}_n^{(d)}$ is the tube of rank $n$ from `classical' Auslander--Reiten
theory.

After having defined the analogues of the path algebras of $\mathbb{A}_n$ and
$\tilde{\mathbb{A}}_{n-1}$, the next step is to find analogues for admissible
relations for Nakayama algebras. In contrast to classical Nakayama algebras,
which arise as quotients by admissible ideals of the algebras $A_n^{(1)}$ and
$\tilde{A}_{n-1}^{(1)}$, the higher Nakayama algebras arise as idempotent
quotients from $A_n^{(d)}$ and $\tilde{A}_{n-1}^{(d)}$, respectively.

Recall that a convenient combinatorial way to label Nakayama algebras is by means of the vector
\begin{equation*}
(\dim P_0,\dim P_1,\dots,\dim P_{n-1})
\end{equation*}
where $P_i$ is the indecomposable projective to the vertex
$i$. This vector is called the \emph{Kupisch series} corresponding to the
Nakayama algebra. Given a Kupisch series
$\underline{\ell}=(\ell_0,\dots,\ell_{n-1})$, the corresponding set of relations
one needs to divide out by is given by the paths $\setP{i-\ell_i\leadsto
  i}{\ell_{i-1}\geq \ell_i}$. Note that the interval modules $M_{(\mu_1,\mu_2)}$
for the Nakayama algebra with Kupisch series $\underline{\ell}$ satisfy
$\mu_1-\mu_2+1\leq \ell_{\mu_1}$ as $\mu_1-\mu_2+1$ gives the length of the
interval module $M_{(\mu_1,\mu_2)}$ and $\mu_1$ gives its top and every module
is a quotient of the corresponding indecomposable projective module with the
same top.

Analogously, in the higher setting for $d\geq 2$, the higher Nakayama algebra
$A_{\underline{\ell}}^{(d)}$ (of type $\mathbb{A}$, i.e. for $\ell_0=1$)
associated to the Kupisch series $\underline{\ell}$ is defined to be the
idempotent quotient of $A_n^{(d)}$ by the idempotents not in
\[\mathbf{os}^{(d)}_{\underline{\ell}}:=\setP{(\mu_1,\dots,\mu_d)\in
    \mathbf{os}_n^{(d)}}{\mu_1-\mu_d+1\leq \ell_{\mu_1}}.\] For the Nakayama
algebras of type $\tilde{\mathbb{A}}_{n-1}$ given the Kupisch series $\ell$ one
defines an infinite series $\tilde{\ell}$ by periodically extending $\ell$. One
then defines
\[\mathbf{os}^{(d)}_{\tilde{\underline{\ell}}}:=\setP{(\mu_1,\dots,\mu_d)\in
    \mathbf{os}^{(d)}}{\mu_1-\mu_d+1\leq \tilde{\ell}_{\mu_1}}.\] Note that by
construction, $\mathbf{os}^{(d)}_{\tilde{\underline{\ell}}}$ is stable under
$\varphi_d^n$. Define $\tilde{A}_{\underline{\ell}}^{(d)}$ to be the idempotent
quotient of $\tilde{A}_{n-1}^{(d)}$ by the idempotents not corresponding to the
ones in $\mathbf{os}^{(d)}_{\tilde{\underline{\ell}}}$.


\begin{theorem}
  For every Kupisch series $\underline{\ell}$ and every $d\geq 1$, the algebra
  $A_{\underline{\ell}}^{(d)}$ (respectively
  $\tilde{A}_{\underline{\ell}}^{(d)}$ in type $\tilde{\mathbb{A}}$) is weakly
  $d$-representation-finite. The modules in the corresponding
  $d$-cluster-tilting module are the interval modules $M_{\lambda}$ with
  $\lambda\in \mathbf{os}^{(d+1)}_{\underline{\ell}}$ (respectively $\lambda\in
  \mathbf{os}^{(d+1)}_{\tilde{\underline{\ell}}}$ in type $\tilde{\mathbb{A}}$).
\end{theorem}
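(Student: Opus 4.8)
The plan is to exhibit, in type $\mathbb{A}$, the module
\[
  M_{\underline{\ell}}^{(d)}:=\bigoplus_{\lambda\in\mathbf{os}^{(d+1)}_{\underline{\ell}}}M_\lambda
\]
as a $d$-cluster-tilting $A_{\underline{\ell}}^{(d)}$-module, and to run the same argument $\varphi_d^n$-equivariantly in type $\tilde{\mathbb{A}}$. For $d=1$ this is the classical statement that a Nakayama algebra is representation-finite with the uniserial (interval) modules as indecomposables, so I assume $d\geq2$ and use that $A_{\underline{\ell}}^{(d)}=A_n^{(d)}/I$ is the idempotent quotient of the higher Auslander algebra $A_n^{(d)}$ by the ideal $I$ generated by the idempotents at the vertices of $\mathbf{os}^{(d)}_n\setminus\mathbf{os}^{(d)}_{\underline{\ell}}$. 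By Theorem~\ref{thm:higher_auslander_type_A} the interval modules $M_\lambda$, $\lambda\in\mathbf{os}^{(d+1)}_n$, are the indecomposable summands of the unique basic $d$-cluster-tilting $A_n^{(d)}$-module $M_n^{(d)}$; in particular $\Ext^i_{A_n^{(d)}}(M_\lambda,M_\mu)=0$ for $1\leq i\leq d-1$, and $\add M_n^{(d)}$ is maximal with this property. The strategy is to transport this $d$-cluster-tilting subcategory along the quotient.

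First I would settle the combinatorial matching. Viewing $\mmod A_{\underline{\ell}}^{(d)}$ as the full subcategory of $\mmod A_n^{(d)}$ of modules annihilated by $I$, a summand $M_\lambda$ of $M_n^{(d)}$ descends to $A_{\underline{\ell}}^{(d)}$ exactly when its support, the interval $[(\lambda_2,\dots,\lambda_{d+1}),(\lambda_1,\dots,\lambda_d)]$ in $\mathbf{os}^{(d)}$, lies inside $\mathbf{os}^{(d)}_{\underline{\ell}}$. I claim this is equivalent to $\lambda\in\mathbf{os}^{(d+1)}_{\underline{\ell}}$. On one hand the vertex $(\lambda_1,\lambda_2,\dots,\lambda_{d-1},\lambda_{d+1})$ lies in the support and already forces $\lambda_1-\lambda_{d+1}+1\leq\ell_{\lambda_1}$. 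On the other hand, for any $\mu$ in the support one has $\mu_1\leq\lambda_1$ and $\mu_d\geq\lambda_{d+1}$, so the Kupisch inequality $\ell_{\lambda_1}\leq\ell_{\mu_1}+(\lambda_1-\mu_1)$ yields
\[
  \mu_1-\mu_d+1\leq(\lambda_1-\lambda_{d+1}+1)-(\lambda_1-\mu_1)\leq\ell_{\lambda_1}-(\lambda_1-\mu_1)\leq\ell_{\mu_1},
\]
so the entire support lies in $\mathbf{os}^{(d)}_{\underline{\ell}}$. Hence the summands of $M_{\underline{\ell}}^{(d)}$ are precisely those summands of $M_n^{(d)}$ that survive the quotient.

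The homological heart is to compare extension groups across the quotient. For $X,Y\in\mmod A_{\underline{\ell}}^{(d)}$ the relevant tool is the change-of-rings spectral sequence
\[
  E_2^{p,q}=\Ext^p_{A_{\underline{\ell}}^{(d)}}\!\bigl(X,\Ext^q_{A_n^{(d)}}(A_{\underline{\ell}}^{(d)},Y)\bigr)\Longrightarrow\Ext^{p+q}_{A_n^{(d)}}(X,Y).
\]
The key vanishing I would establish is $\Ext^q_{A_n^{(d)}}(A_{\underline{\ell}}^{(d)},M_\mu)=0$ for $1\leq q\leq d-1$ and $\mu\in\mathbf{os}^{(d+1)}_{\underline{\ell}}$; it says that the failure of $A_{\underline{\ell}}^{(d)}$ to be $d$-hereditary is concentrated in degrees $\geq d$, consistent with the obstructions discussed in Section~\ref{sec:obstructions}. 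Granting it, the edge maps give isomorphisms $\Ext^i_{A_{\underline{\ell}}^{(d)}}(X,Y)\cong\Ext^i_{A_n^{(d)}}(X,Y)$ for $0\leq i\leq d-1$, so orthogonality of $M_{\underline{\ell}}^{(d)}$ is immediate from that of $M_n^{(d)}$. For maximality one takes $X\in\mmod A_{\underline{\ell}}^{(d)}$ with $\Ext^i(X,M_{\underline{\ell}}^{(d)})=0=\Ext^i(M_{\underline{\ell}}^{(d)},X)$ for $1\leq i\leq d-1$ and uses these isomorphisms to feed $X$ into the maximality of $\add M_n^{(d)}$; combined with the combinatorial step this places $X$ in $\add M_{\underline{\ell}}^{(d)}$. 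The delicate point is that one must also control the extensions of $X$ in degrees $1,\dots,d-1$ with the discarded summands $M_\nu$, $\nu\in\mathbf{os}^{(d+1)}_n\setminus\mathbf{os}^{(d+1)}_{\underline{\ell}}$, which the isomorphisms do not see; I expect this from support considerations, but alternatively I would read the two defining equalities off the explicit minimal projective resolutions of interval modules over $A_{\underline{\ell}}^{(d)}$ provided in \cite{JK16}.

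For type $\tilde{\mathbb{A}}$ the same scheme applies inside $\nil\tilde{A}^{(d)}_{n-1}$: here the ambient $d$-cluster-tilting subcategory is $\mathcal{T}^{(d)}_n$ (the images of the $M_\lambda$, $\lambda\in\mathbf{os}^{(d+1)}_n$), and $\tilde{A}_{\underline{\ell}}^{(d)}$ is the idempotent quotient by the vertices outside the $\varphi_d^n$-stable set $\mathbf{os}^{(d)}_{\tilde{\underline{\ell}}}$. The support computation is identical with $\tilde{\ell}$ in place of $\ell$ and descends along the covering $A_\infty^{(d)}\to\tilde{A}_{n-1}^{(d)}$, the periodicity being automatic, and the change-of-rings comparison is run $\varphi_d^n$-equivariantly. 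The main obstacle throughout is the key degreewise vanishing $\Ext^q_{A_n^{(d)}}(A_{\underline{\ell}}^{(d)},-)=0$ for $1\leq q\leq d-1$ (and its tube analogue): idempotent quotients are not homologically well behaved in general, and establishing this low-degree vanishing is exactly where the Koszulity of these categories and the explicit resolutions of interval modules from \cite{JK16} are indispensable.
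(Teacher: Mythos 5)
Your overall architecture --- realise $A^{(d)}_{\underline{\ell}}$ as an idempotent quotient of $A^{(d)}_n$ and transport $\add M^{(d)}_n$ along the quotient --- is the same as in \cite{JK16}, to which the paper defers for the proof, and your combinatorial step is correct: the support computation via the iterated Kupisch inequality $\ell_{\lambda_1}\leq \ell_{\mu_1}+(\lambda_1-\mu_1)$ does show that $M_\lambda$ descends exactly when $\lambda\in\mathbf{os}^{(d+1)}_{\underline{\ell}}$. The gap is that the two homological inputs your argument runs on are never proved, and the routes you point to are the wrong ones. For the key vanishing $\Ext^q_{A^{(d)}_n}(A^{(d)}_{\underline{\ell}},M_\mu)=0$ for $1\leq q\leq d-1$, neither Koszulity nor explicit resolutions is the point; what is needed is the lemma (absent from your proposal) that each indecomposable projective $A^{(d)}_{\underline{\ell}}$-module, viewed as an $A^{(d)}_n$-module, is again an interval module, namely $M_{(\mu_1,\dots,\mu_d,\mu_1-\ell_{\mu_1}+1)}$, so that $A^{(d)}_{\underline{\ell}}\in\add M^{(d)}_n$ as a left $A^{(d)}_n$-module and your vanishing is just rigidity of $M^{(d)}_n$. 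This lemma and its injective counterpart are in effect the hypotheses of the idempotent-reduction theorem for $d\mathbb{Z}$-cluster-tilting subcategories on which the proof in \cite{JK16} rests, so your proposal assumes rather than proves the combinatorial heart of the argument.

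The more serious gap is maximality. To feed an arbitrary $X\in\mmod A^{(d)}_{\underline{\ell}}$ with $\Ext^i(X,M^{(d)}_{\underline{\ell}})=0$ into maximality over $A^{(d)}_n$, you must show $\Ext^i_{A^{(d)}_n}(X,M_\nu)=0$ for $1\leq i\leq d-1$ for the \emph{discarded} $\nu$ as well, and this is not a support statement: a priori it is precisely as strong as the theorem itself (a posteriori it holds because $X$ lies in $\add M^{(d)}_n$, which is circular). It can be rescued inside your own framework: the key vanishing holds for all $\nu\in\mathbf{os}^{(d+1)}_n$, discarded or not, so the same spectral sequence degenerates there too and gives $\Ext^i_{A^{(d)}_n}(X,M_\nu)\cong \Ext^i_{A^{(d)}_{\underline{\ell}}}\bigl(X,\Hom_{A^{(d)}_n}(A^{(d)}_{\underline{\ell}},M_\nu)\bigr)$ for $i\leq d-1$; one must then prove a second combinatorial lemma, that the largest $A^{(d)}_{\underline{\ell}}$-submodule of a discarded interval module is itself an interval module with label in $\mathbf{os}^{(d+1)}_{\underline{\ell}}$ (or zero), and only then does the hypothesis on $X$ apply. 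You would still owe the dual argument (largest quotients, injectives of the quotient algebra, via the Tor spectral sequence or the opposite algebra) for the second defining equality of a $d$-cluster-tilting module, which your proposal does not address, and in type $\tilde{\mathbb{A}}$ equivariance alone is not enough, since $\tilde{A}^{(d)}_{n-1}$ is not a finite dimensional algebra and one works in $\nil$ rather than $\mmod$. In short: the skeleton is right and completable, but the statements carrying all the weight are exactly the ones left unproven, and they constitute the substance of the proof in \cite{JK16}.
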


Among all weakly $d$-representation-finite algebras, the higher Nakayama
algebras belong to a more special class of algebras, since their cluster-tilting
modules $M$ satisfy the stronger condtion
\[\Ext^i(M,M)\neq 0\implies i\in d\mathbb{Z}.\]
One of their special features is that
exact sequences of length $d$ in the cluster-tilting subcategory give rise to
long exact $\Ext^d$-sequences, see \cite{IJ16}. Such algebras are called
\emph{$d\mathbb{Z}$-representation-finite}. They can also be characterised by
the fact that the $d$-cluster-tilting module is closed under $\Omega^d$.

\section{Obstructions to an alternative definition of higher Nakayama algebras}
\label{sec:obstructions}

The reader may ask whether there is a characterisation of higher Nakayama
algebras similar to projectives and injectives being uniserial. The main problem
in answering this question is that it is not clear what the analogue of a
filtration is when there are only higher $\Ext$-groups, but no $\Ext^1$. In this
section we discuss a few more problems with such definition and with statements
involving the simples in a cluster-tilting subcategory.

\subsection{Simple modules and the $\Ext^d$-quiver of the higher Nakayama
  algebras}

We start by characterising simples only in terms of the cluster tilting
subcategory.

\begin{definition}
  Let $\mathcal{A}$ be an additive category and $S\in\mathcal{A}$ a non-zero
  object. We say that $S$ is \emph{Schur simple} if every non-zero morphism
  $X\to S$ is an epimorphism and every non-zero morphism $S\to Y$ is a
  monomorphism.
\end{definition}

It turns out that, with this definition, Schur simples of a cluster tilting
subcategory are precisely the simple modules.

\begin{lemma}
  Let $\mathcal{A}$ be an abelian category and $\mathcal{M}\subseteq\mathcal{A}$
  a functorially finite generating-cogenerating subcategory. Then, an object
  $S\in\mathcal{M}$ is Schur simple if and only if it is a simple object in
  $\mathcal{A}$.
\end{lemma}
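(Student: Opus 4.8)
The plan is to prove the two implications separately, reading the Schur-simplicity condition with respect to the additive category $\mathcal{M}$, so that the epimorphisms and monomorphisms appearing in it are the categorical ones \emph{internal} to $\mathcal{M}$ rather than those of $\mathcal{A}$. The conceptual heart of the argument is therefore to compare these internal notions with the honest epimorphisms and monomorphisms of the ambient abelian category, and this is precisely where the generating and cogenerating hypotheses enter.

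The implication from simple to Schur simple is formal. If $S$ is simple in $\mathcal{A}$, then any nonzero $f\colon X\to S$ with $X\in\mathcal{M}$ has nonzero image, which must equal $S$, so $f$ is an epimorphism in $\mathcal{A}$ and hence also in $\mathcal{M}$, since testing against objects of $\mathcal{M}$ is a weaker condition. Dually, any nonzero $S\to Y$ has zero kernel and is therefore a monomorphism in $\mathcal{A}$, hence in $\mathcal{M}$. Thus $S$ is Schur simple.

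For the converse I would argue by contraposition. Suppose $S\in\mathcal{M}$ is not simple in $\mathcal{A}$ and fix a short exact sequence $0\to S'\to S\to S''\to 0$ in $\mathcal{A}$ with $S'$ and $S''$ both nonzero. Since $\mathcal{M}$ is generating, choose an epimorphism $M\twoheadrightarrow S'$ with $M\in\mathcal{M}$ and let $g\colon M\to S$ be its composite with the inclusion $S'\hookrightarrow S$; then $g$ is a nonzero morphism of $\mathcal{M}$. Since $\mathcal{M}$ is cogenerating, choose a monomorphism $S''\hookrightarrow N$ with $N\in\mathcal{M}$ and let $h\colon S\to N$ be its composite with the projection $S\twoheadrightarrow S''$; then $h$ is likewise a nonzero morphism of $\mathcal{M}$. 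Because $h$ factors through $S\twoheadrightarrow S''$ while the image of $g$ is $S'=\ker(S\twoheadrightarrow S'')$, we have $hg=0$. If $S$ were Schur simple, the nonzero morphism $g$ would be an epimorphism in $\mathcal{M}$, and then $hg=0=0\cdot g$ would force $h=0$, contradicting $h\neq 0$. Hence $S$ is simple in $\mathcal{A}$.

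The step I expect to be the crux is the one used implicitly in the converse: the cogenerating property supplies, for any $g$ whose image is a proper subobject of $S$, a nonzero test morphism $h$ in $\mathcal{M}$ annihilating it, so that being an ``epimorphism in $\mathcal{M}$'' is in fact as strong as being an epimorphism in $\mathcal{A}$. Once this is recognised, both directions are short. Note that only the generating and cogenerating hypotheses are needed; the functorial finiteness of $\mathcal{M}$, although part of the standing assumptions, plays no role in this particular statement.
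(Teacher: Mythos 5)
Your proof is correct, but it takes a genuinely different route from the paper's. For the nontrivial direction the paper upgrades Schur simplicity from $\mathcal{M}$ to $\mathcal{A}$: given a nonzero $f\colon S\to X$ in $\mathcal{A}$, it composes $f$ with a left $\mathcal{M}$-approximation $g\colon X\to M$, which is a monomorphism because $\mathcal{M}$ is cogenerating, and applies Schur simplicity to $gf$; dually for morphisms into $S$; it then uses that a Schur simple object of an abelian category is simple. Your argument instead proceeds by contraposition: from a proper nonzero subobject $S'\subsetneq S$ you manufacture two nonzero morphisms $g\colon M\to S$ and $h\colon S\to N$ with $M,N\in\mathcal{M}$ and $hg=0$, which contradicts Schur simplicity read purely internally to $\mathcal{M}$. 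This buys two things. First, it avoids approximations entirely, so your closing observation is right: your proof establishes the lemma for any generating-cogenerating full subcategory, with functorial finiteness playing no role, whereas the paper's proof invokes covariant (and, dually, contravariant) finiteness. Second, it sidesteps the comparison between epimorphisms and monomorphisms internal to $\mathcal{M}$ and those of $\mathcal{A}$ — a point the paper's proof passes over quickly when it treats the $\mathcal{M}$-monomorphism $gf$ as a monomorphism in $\mathcal{A}$ (that upgrade is true, but itself requires the generating hypothesis). What the paper's approach buys in exchange is a reusable intermediate statement: under its hypotheses, Schur simplicity in $\mathcal{M}$ and Schur simplicity in $\mathcal{A}$ coincide, which is slightly more information than the equivalence with simplicity alone.
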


\begin{proof}
  Let $S\in \mathcal{M}$ be simple in $\mathcal{A}$. Then, of course, it is
  Schur simple. For the other direction we show that a Schur simple object in
  $\mathcal{M}$ is in fact Schur simple in $\mathcal{A}$, and hence simple. For
  this, let $S\in \mathcal{M}$ be Schur simple and $ X\in \mathcal{A}$ be
  arbitrary with a morphism $f\colon S\to X$. Since $\mathcal{M}$ is covariantly
  finite, there exists a left $\mathcal{M}$-approximation $g\colon X\to M$ for
  some $M\in \mathcal{M}$. Since $S$ is Schur simple in $\mathcal{M}$, the
  composition $gf$ is a monomorphism or zero. Since the left approximation $g$
  is a monomorphism since $\mathcal{M}$ is cogenerating, it follows that $f$ is
  a monomorphism or zero. Dually, one shows that every non-zero morphism $X\to
  S$ in $\mathcal{A}$ is an epimorphism. It follows that $S$ is Schur simple in
  $\mathcal{A}$ which is equivalent to being simple.
\end{proof}

\begin{definition}
  \th\label{extd-quiver} Let $\mathcal{A}$ be an abelian category and
  $\mathcal{M}\subseteq\mathcal{A}$ a $d$-cluster-tilting subcategory. The
  \emph{$\Ext^d$-quiver} of $\mathcal{M}$ is the quiver which has as vertices
  the isomorphism classes of Schur simples in $\mathcal{M}$ and the number of
  arrows $[S]\to [S']$ is given by $\dim_\mathbbm{k}\Ext^d_\mathcal{A}(S,S')$.
\end{definition}
 
The $\Ext^d$-quivers of the higher Nakayama algebras are precisely what one
would expect, namely equal to the $\Ext^1$-quivers of the corresponding
(classical) Nakayama algebras.

\begin{proposition}
  Let $\underline{\ell}$ be a Kupisch series. Then, the the $\Ext^d$-quiver of
  $A_{\underline{\ell}}^{(d)}$ is $\mathbb{A}_n$ in case $\ell_0=1$. while the
  $\Ext^d$-quiver of $\tilde{A}_{\underline{\ell}}^{(d)}$ is
  $\widetilde{\mathbb{A}}_{n-1}$ in case $\ell_0\neq 1$.
\end{proposition}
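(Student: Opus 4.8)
The goal is to show that the $\Ext^d$-quiver of the higher Nakayama algebra $A_{\underline{\ell}}^{(d)}$ (respectively $\tilde{A}_{\underline{\ell}}^{(d)}$) coincides with the $\Ext^1$-quiver of the corresponding classical Nakayama algebra, namely $\mathbb{A}_n$ (respectively $\widetilde{\mathbb{A}}_{n-1}$). By \th\ref{extd-quiver}, this amounts to two tasks: first, identifying the Schur simples in the $d$-cluster-tilting subcategory $\mathcal{M}_{\underline{\ell}}^{(d)}$, and second, computing the dimensions of the $\Ext^d$-groups between them. The plan is to carry these out in order, leaning on the Schur-simple lemma proved just above and on the explicit combinatorial description of the interval modules.

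\emph{Identifying the vertices.} By the preceding lemma, the Schur simples in $\mathcal{M}_{\underline{\ell}}^{(d)}$ are exactly the simple objects of $\mmod A_{\underline{\ell}}^{(d)}$ that happen to lie in the cluster-tilting subcategory. First I would determine which of the interval modules $M_\lambda$, for $\lambda\in\mathbf{os}^{(d+1)}_{\underline{\ell}}$, are simple. A simple module corresponds to an interval of length one, i.e. to the situation where the interval $[(\lambda_2,\dots,\lambda_{d+1}),(\lambda_1,\dots,\lambda_d)]$ in $\mathbf{os}^{(d)}$ degenerates to a single point; this forces $\lambda_1=\lambda_2=\dots=\lambda_{d+1}$, so the simple interval modules are indexed by the diagonal vertices $(i,i,\dots,i)$. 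Intersecting with the constraint defining $\mathbf{os}^{(d)}_{\underline{\ell}}$ (where the length condition $\mu_1-\mu_d+1\le\ell_{\mu_1}$ is automatically satisfied since the length is $1$) shows the simples are indexed precisely by $i\in\{0,1,\dots,n-1\}$, giving $n$ vertices — matching the $n$ vertices of $\mathbb{A}_n$ and of $\widetilde{\mathbb{A}}_{n-1}$. In the $\tilde{\mathbb{A}}$ case one works modulo the $\varphi_d^n$-action, so the simples are indexed by $\mathbb{Z}/n\mathbb{Z}$.

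\emph{Computing the arrows.} The substantive step is computing $\Ext^d_{A_{\underline{\ell}}^{(d)}}(S_i,S_j)$ for the simples $S_i,S_j$. Here I would exploit the Koszulity and the global-dimension-$d$ structure recalled earlier: since $A_n^{(d)}$ (and its idempotent quotients) has a tightly controlled projective resolution coming from the $d$-dimensional incidence structure of $A^{(d)}$, the top $\Ext^d$-group between simples should be read off from the minimal projective resolution of $S_i$, whose $d$-th syzygy sees exactly the $d$-dimensional analogue of an arrow. Concretely, I expect $\dim_\mathbbm{k}\Ext^d(S_i,S_j)=1$ precisely when $j=i-1$ (reflecting the arrow $i-1\to i$ surviving the Kupisch-series truncation, governed by whether the relevant interval lies in $\mathbf{os}^{(d)}_{\underline{\ell}}$) and $0$ otherwise, with the cyclic wrap-around $0\equiv n$ appearing in the $\tilde{\mathbb{A}}$ case. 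The cleanest route is to reduce to the un-truncated algebra $A_n^{(d)}$, where the Koszul dual / the shape of the minimal resolution of simples is known, and then track how passing to the idempotent quotient $\mathbf{os}^{(d)}_{\underline{\ell}}$ removes or preserves each top self-extension.

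\emph{Main obstacle.} The hard part will be the $\Ext^d$ computation over the \emph{truncated} algebra rather than over $A_n^{(d)}$ itself: idempotent truncation does not in general preserve projective resolutions, so I must verify that the truncation dictated by $\mathbf{os}^{(d)}_{\underline{\ell}}$ does not alter the top $\Ext^d$ between the surviving simples, and that the boundary zero-relations (flagged after the definition of $A_\infty^{(d)}$) do not introduce spurious extensions. I would handle this by computing $\Ext^d(S_i,S_j)$ directly inside the ambient category $\mmod A_\infty^{(d)}$ using the explicit interval-module resolutions, and then checking that all the relevant modules in the resolution of $S_i$ lie in $\mathbf{os}^{(d)}_{\underline{\ell}}$ whenever $j=i-1$ while the degree-$d$ term vanishes otherwise; the restriction/truncation functor is exact, so once the resolution stays inside the support it computes the same $\Ext$. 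The cyclic case then follows by applying the pushdown functor along $\varphi_d^n$ and using that $M_\lambda\cong M_\mu$ iff $\lambda\equiv\mu \bmod n\mathbb{Z}(1,\dots,1)$, which closes the $\mathbb{A}_n$ quiver into the cycle $\widetilde{\mathbb{A}}_{n-1}$.
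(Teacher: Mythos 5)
Your skeleton is the natural one, and its first half is correct: by the lemma on Schur simples, the vertices of the $\Ext^d$-quiver are exactly the simple modules lying in the $d$-cluster-tilting subcategory, and an interval module $M_\lambda$ is simple precisely when $\lambda_1=\dots=\lambda_{d+1}$, giving the $n$ diagonal simples $S_{(i,\dots,i)}$ (indexed by $\mathbb{Z}/n\mathbb{Z}$ in type $\widetilde{\mathbb{A}}$). The expected arrow count, $\dim_{\mathbbm{k}}\Ext^d(S_{(i,\dots,i)},S_{(j,\dots,j)})=1$ exactly for consecutive indices, is also the right answer. Keep in mind, though, that the paper's own proof is a one-line citation of \cite{JK16}, so the whole mathematical content of an actual proof is the part you left as a plan --- and that plan contains a genuine gap. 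The proposed truncation mechanism cannot work: for an idempotent quotient $B=A/AeA$ there is no exact ``restriction/truncation'' functor $\mmod A\to\mmod B$ (the inclusion $\mmod B\hookrightarrow\mmod A$ is exact but does not relate the two kinds of projectives, and the quotient functor $X\mapsto X/(AeA)X$ is only right exact). Worse, the premise that the minimal resolution of a diagonal simple over $A^{(d)}_n$ or $A^{(d)}_\infty$ ``stays inside'' $\mathbf{os}^{(d)}_{\underline{\ell}}$ fails whenever the truncation is nontrivial: already the zeroth term fails, since for $d=2$ the projective cover $P_{(i,i)}$ over $A^{(2)}_n$ has composition factors $S_{(\nu,i)}$ for every $\nu\geq i$, and $(\nu,i)$ leaves $\mathbf{os}^{(2)}_{\underline{\ell}}$ as soon as $\nu-i+1>\ell_\nu$. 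Hence the truncated projectives are proper quotients of the ambient ones, the two minimal resolutions are genuinely different complexes, and the exactness of the truncated complex is precisely what would need proof; your verification criterion would simply never be satisfied.

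The way to close the gap is to use the structural results of \cite{JK16} that the paper is implicitly invoking (this covers $d\geq 2$; the case $d=1$ is the classical statement about quivers of Nakayama algebras). One has $A^{(d)}_{\underline{\ell}}\cong\End_{A^{(d-1)}_{\underline{\ell}}}(M^{(d-1)}_{\underline{\ell}})^{\op}$, where $M^{(d-1)}_{\underline{\ell}}$ is the $(d-1)$-cluster-tilting module, and by higher Auslander--Reiten theory, applying $\Hom_{A^{(d-1)}_{\underline{\ell}}}(M^{(d-1)}_{\underline{\ell}},-)$ to the $(d-1)$-almost split sequence $0\to\tau_{d-1}X\to C_{d-2}\to\dots\to C_0\to X\to 0$ ending at a non-projective indecomposable summand $X$ of $M^{(d-1)}_{\underline{\ell}}$ yields the \emph{minimal} projective resolution, of length $d$, of the simple $A^{(d)}_{\underline{\ell}}$-module $S_X$. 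Minimality gives $\Ext^d(S_X,S_Y)\cong\mathbbm{k}$ if $Y\cong\tau_{d-1}X$ and $0$ otherwise. Now the diagonal simples are exactly the summands $M_{(i,\dots,i)}$ of $M^{(d-1)}_{\underline{\ell}}$, they lie on a single $\tau_{d-1}$-orbit with $\tau_{d-1}M_{(i,\dots,i)}\cong M_{(i-1,\dots,i-1)}$ (a result of \cite{JK16} quoted in this survey), and $M_{(0,\dots,0)}$ is projective in type $\mathbb{A}$ (as $\ell_0=1$) while no diagonal simple is projective in type $\widetilde{\mathbb{A}}$. This gives precisely the linearly oriented quiver $\mathbb{A}_n$, respectively the oriented cycle $\widetilde{\mathbb{A}}_{n-1}$, with all multiplicities equal to one. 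Your covering-theory remark for the cyclic case is fine in spirit, but it becomes unnecessary once the argument is organised this way.
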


\begin{proof}
  This follows immediately from the results of \cite{JK16}.
\end{proof}

We now give an example showing that, in general the $\Ext^d$-quiver for
$d$-cluster-tilting subcategories of module categories of finite dimensional
algebras is ill-behaved.
 
 \begin{example}
   \th\label{bad-ext2} Let $A$ be the preprojective algebra of type
   $\mathbb{A}_3$ which is also the higher Nakayama algebra
   $\tilde{A}_{(3)}^{(2)}$, that is, $A$ is the preprojective algebra of
   $\mathbb{A}_3$, i.e. the path algebra of the quiver
   \begin{center}
     \includegraphics{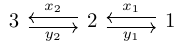}
   \end{center}
   bound by the relations $y_1x_1=0$, $x_2y_2=0$ and $x_1y_1=y_2x_2$. The reader
   can verify that
   \[
     M=A\oplus\rep{1}\oplus\rep{2\\1}\oplus\rep{1\\2}\quad\text{and}\quad
     N=A\oplus\rep{2\\1}\oplus\rep{2\\3}\oplus\rep{2\\13}
   \]
   are $2$-cluster-tilting $A$-modules, $M$ being the one given by interval
   modules. For the convenience of the reader, the Auslander--Reiten quiver of
   $\mmod A$ is given below, where indecomposable $A$-modules are described by
   their radical filtration and the indecomposable direct summands of $M$ and
   $N$ are indicated with rectangles and circles respectively.
   \begin{center}
     \includegraphics{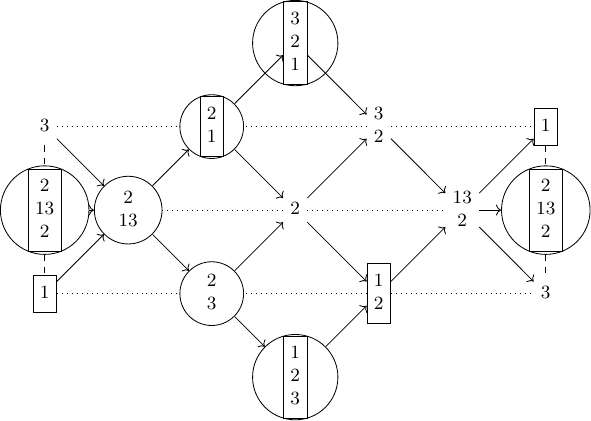}
   \end{center}
   The $\Ext^2$-quiver of $\add M$ has one vertex and one arrow and the
   $\Ext^2$-quiver of $\add N$ is empty since $N$ has no simple summands. Thus,
   this example shows that in general
   \begin{itemize}
   \item the $\Ext^d$-quiver of $\mathcal{M}$ depends on $\mathcal{M}$ and not
     only on $A$, and that
   \item the $\Ext^d$-quiver of $\mathcal{M}$ may be empty.
   \end{itemize}
   In particular, the $\Ext^d$-quiver of $\mathcal{M}$ in general does not
   determine $\mathcal{M}$ nor $A$.
 \end{example}

 \subsection{Global dimension of the higher Nakayama algebras}

 A further example shows that even for the higher Nakayama algebras, where the
 simple modules in many ways behave like simple modules in the classical sense,
 e.g. in the sense that they all lie on a unique $\tau_d$-orbit (see
 \cite{JK16}), in some respect they are different. The subsequent example shows
 that the global dimension of one of the Nakayama algebras is not attained by a
 projective dimension of a simple in the cluster-tilting subcategory as well as
 the global dimension of $A^{(d)}_{\underline{\ell}}$ is in general not equal to
 $d\cdot \gldim A^{(1)}_{\underline{\ell}}$. Moreover, the example also shows
 that, in general,
 \[
   \dim\Ext_{A^{(1)}_{\underline{\ell}}}^{2}(S_i,S_j)\neq\dim\Ext_{A^{(d)}_{\underline{\ell}}}^{2d}(S_{(i,i,\cdots,
     i)},S_{(j,j,\cdots,j)}).
 \]
 This suggest that there is no straightforward way to generalise relations in
 $A_{\underline{\ell}}^{(1)}$, which are given by the
 $\Ext_{A_{\underline{\ell}}^{(1)}}^{2}$ between simples, to higher values of
 $d$.

\begin{example}
  Let $\underline{\ell}=(1,2,2,3,3,4,3)$. The following figure shows the module
  category of $A^{(1)}_{\underline{\ell}}$. The numbers on the vertices show the
  projective dimensions of the corresponding indecomposable modules.
  \begin{center}
    \includegraphics[width=\textwidth]{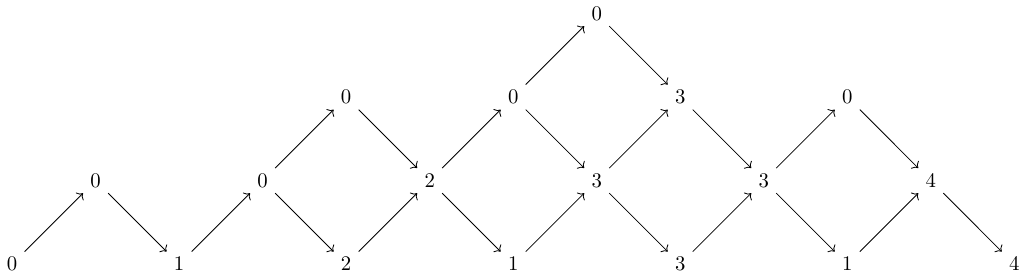}
  \end{center}
  In particular, the global dimension is $4$ and is attained by the simple
  module $S_{6}$. The following figure shows the $2$-cluster-tilting module of
  $A_{\underline{\ell}}^{(2)}$ whose indecomposable direct summands are the
  interval modules. The numbers on the vertices show the projective dimensions
  divided by $d=2$ of the corresponding indecomposable modules.
  \begin{center}
    \includegraphics[width=\textwidth]{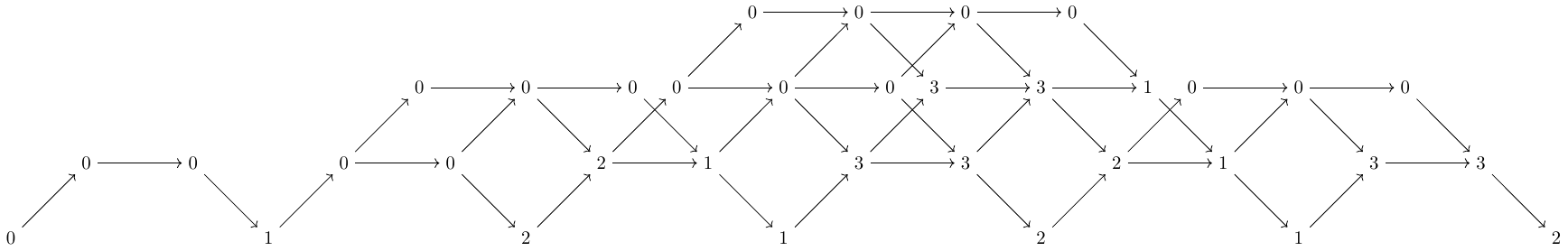}
  \end{center}
  Note that the global dimension of the algebra is $6$, but is not attained by a
  simple module in the cluster tilting subcategory. Also note that
  \[
    \gldim A^{(2)}_{\underline{\ell}}=6\neq 2\cdot 4=2\cdot \gldim
    A^{(1)}_{\underline{\ell}}.
  \]
  Finally, note that
  \[
    0\neq\dim\Ext_{A^{(1)}_{\underline{\ell}}}^{2}(S_6,S_3)\neq\dim\Ext_{A^{(2)}_{\underline{\ell}}}^{4}(S_{(6,6)},S_{(3,3)})=0
  \]
  where the rightmost vanishing condition can be shown by noting that the fifth
  term in a minimal projective resolution of $S_{(6,6)}$ does not have a summand
  $S_{(3,3)}$ in its top.
  
  It is not even true that finiteness of global dimension is preserved when
  passing to the higher analogue\footnote{The authors want to thank Yiping Chen
    for posing this question.}. To show this, consider the same example as
  before but with the last and the first vertex identified, i.e. the higher
  Nakayama algebra of type $\tilde{\mathbb{A}}_5$ with Kupisch series
  $(2,3,3,4,3,2)$. Then, $\projdim S_0=\infty$ for
  $\tilde{A}_{{\underline{\ell}}}^{(1)}$, but $\gldim
  \tilde{A}_{\underline{\ell}}^{(2)}=6<\infty$.
\end{example}

\subsection{Dominant dimension of the higher Nakayama algebras}

Concerning the dominant dimension, the authors proved in \cite{JK16} that
$\domdim A\geq d$ for every $d$-Nakayama algebra $A$. It is however also not
true that $\domdim \tilde{A}_{\underline{\ell}}^{(d)}=d\cdot \domdim
\tilde{A}_{\underline{\ell}}^{(1)}$ as the example of the higher Nakayama
algebra $\tilde{A}_{(3,4,4)}^{(d)}$ shows. For $d=1$ its dominant dimension is
$4$ as $P_{1}=I_{1}$ and $P_{2}=I_{2}$ are projective-injective and the
non-injective module $P_{0}$ has an injective coresolution
\[0\to P_{0}\to I_{1}\to I_{2}\to I_{2}\to I_{0}\to 0.\] On the other hand,
$\tilde{A}^{(2)}_{(3,4,4)}$ has $11$ indecomposable projective of which $8$ are
projective-injective. We label them by their label as interval modules for the
category $A_\infty^{(d)}$. The projective-injectives are $P_{(1,0)}=I_{(3,1)}$,
$P_{(2,1)}=I_{(4,2)}$, $P_{(2,2)}=I_{(4,3)}$, $P_{(1,1)}=I_{(4,1)}$,
$P_{(4,2)}=I_{(2,1)}$, $P_{(2,0)}=I_{(3,2)}$, $P_{(4,3)}=I_{(2,2)}$, and
$P_{(4,1)}=I_{(1,1)}$. The non-injective modules $P_{(0,0)}$, $P_{(3,1)}$, and
$P_{(3,2)}$ have the following injective coresolutions, respectively.
\begin{align*}
  &0\to P_{(3,1)}\to I_{(1,1)}\to I_{(4,1)}\to I_{(4,2)}\to I_{(4,3)}\to I_{(2,2)}\to I_{(3,2)}\to I_{(0,0)}\to 0\\
  &0\to P_{(3,2)}\to I_{(2,1)}\to I_{(4,1)}\to I_{(1,0)}\to I_{(2,0)}\to I_{(2,2)}\to I_{(3,2)}\to I_{(0,0)}\to 0\\
  &0\to P_{(0,0)}\to I_{(3,1)}\to I_{(4,1)}\to I_{(1,1)}\to I_{(2,1)}\to I_{(2,2)}\to I_{(4,3)}\to I_{(2,0)}\to 0
\end{align*}
Hence, $\domdim \tilde{A}_{(3,4,4)}^{(2)}=6\neq 2\cdot 4=2\cdot \domdim
\tilde{A}_{(3,4,4)}^{(1)}$.

\section{Cluster categories of type $\mathbb{A}_n$ and $\mathbb{A}_\infty$}
\label{sec:cluster}

In this section, we survey on the relationship between cluster tilting
subcategories and cluster categories, especially the connection to
triangulations of polygons and generalisations thereof. Already in the first
paper on cluster algebras \cite{FZ02}, Fomin and Zelevinsky studied an example
of a cluster algebra of type $\mathbb{A}_n$, the coordinate ring of the affine
cone over the Grassmannian of $2$-dimensional subspaces of an $(n+3)$-dimensional
(complex) vector space. The clusters of the cluster algebra are in bijection
with triangulations of the polygon with $n$-vertices. The flip, that is the
operation on triangulations replacing one diagonal by the unique other one
yielding a triangulation, corresponds to mutating a cluster variable. Cluster
algebras were later categorified in \cite{BMRRT06}, and the authors of that
paper realised that clusters of the cluster algebra of type $\mathbb{A}_n$ correspond to
cluster-tilting objects in the corresponding cluster category of type $\mathbb{A}_n$,
which is a certain $2$-Calabi--Yau category. As certain tilting objects in
$D^b(\mmod A_n^{(1)})$ provide cluster-tilting objects in the cluster category
of type $\mathbb{A}_n$, the concept of cluster-tilting objects can be seen as an
extension of the concept of tilting objects. There have been two generalisations
of this picture. Staying in the `$2$-dimensional' situation, the situation has
been generalised to $n=\infty$ by Holm and J\o rgensen in \cite{HJ12}
categorifying triangulations of the apeirogon, in that paper called infinitygon.
The corresponding cluster category also arises as the perfect derived category
of the cochain differential graded algebra associated to the $2$-sphere. The
other generalisation was to the higher setting, replacing the algebra
$A_n^{(1)}$ by the higher Auslander algebras $A_n^{(d)}$. This generalisation is
due to Oppermann and Thomas in \cite{OT12} and they prove that a corresponding
cluster category categorifies the triangulations of the cyclic polytope in
dimension $2d$. In a forthcoming paper \cite{JK17}, the authors will unify the
two generalisations. In this section, we survey on the results of \cite{HJ12,
  OT12, JK17}. In the first subsection, we introduce higher analogues of the
cluster categories of type $\mathbb{A}_n$. The second subsection gives the
correspondence between tilting modules for $A_n^{(d)}$ and triangulations of a
cyclic polytope, following \cite{OT12}. The third subsection is concerned with
the results of \cite{OT12} and \cite{JK17} providing a bijection between the
cluster-tilting subcategories of a cluster category and triangulations of a
cyclic polytope and a corresponding infinite version for the cyclic apeirotope.

\subsection{Categorical construction}
\label{sec:categorical_construction}

We start this section by recalling the definitions of a Calabi--Yau triangulated
category and of a spherical object.

\begin{definition}
Let $\mathcal{D}$ be a $\mathbbm{k}$-linear Hom-finite triangulated category.
  \begin{enumerate}
  \item  A functor $\mathbb{S}\colon \mathcal{D}\to \mathcal{D}$ is
    called a \emph{Serre functor} if there is a bifunctorial isomorphism
    \[D\Hom_\mathcal{D}(X,Y)\cong \Hom_\mathcal{D}(Y,\mathbb{S}X).\]
  \item The category $\mathcal{D}$ is said to be \emph{$\delta$-Calabi--Yau} if
    $[\delta]$, the $\delta$-th power of the shift functor, is a Serre functor.
  \item Let $\mathcal{D}$ be a $\delta$-Calabi--Yau triangulated category. An
    object $X\in \mathcal{D}$ is called \emph{$\delta$-spherical} if
    \[\Hom_{\mathcal{D}}(X,X[i])=\begin{cases}\mathbbm{k}&\text{if
          $i=0,\delta$},\\0&\text{else.}\end{cases}\]
  \end{enumerate}
\end{definition}

The thick subcategory generated by a $\delta$-spherical object, i.e. the smallest triangulated category containing the $\delta$-spherical object and being closed under direct summands is independent of the specific $\delta$-spherical object (especially of the ambient triangulated category), see e.g. \cite[Theorem 2.1]{KYZ09}. This
category arises in topology as follows: Let $m\geq 2$ be an integer. Let $X=S^m$
be the $m$-sphere. Let $C^*(X,\mathbbm{k})$ be the cochain differential graded
algebra of $X$ with coefficients in $\mathbbm{k}$ (with respect to the cup
product. Let $D^c(C^*(X,\mathbbm{k}))$ be the perfect derived
category of $C^*(X,\mathbbm{k})$, i.e. the subcategory spanned by all compact
objects in the derived category of the differential graded algebra
$C^*(X,\mathbbm{k})$. The following result is partially due to J\o rgensen and
partially folklore.

\begin{theorem}[cf. {\cite{Jor04}}]
  The category $D^c(C^*(X,\mathbbm{k}))$ is an $m$-Calabi--Yau category. In
  particular, it has almost split triangles. Its Auslander--Reiten quiver
  consists of $(m-1)$ components of type $\mathbb{Z}\mathbb{A}_\infty$.
  Furthermore, $D^c(C^*(X,\mathbbm{k}))$ is classically generated by an
  $m$-spherical object $S$, i.e. $D^c(C^*(X,\mathbbm{k}))$ itself is the
  smallest triangulated subcategory of $D^c(C^*(X,\mathbbm{k}))$ containing $S$.
\end{theorem}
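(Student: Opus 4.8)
The plan is to prove the theorem by identifying $D^c(C^*(X,\mathbbm{k}))$ for $X=S^m$ with a triangulated category whose structure is well-understood, and then reading off the Calabi--Yau property, the Auslander--Reiten theory, and the generation statement. The starting observation is that $C^*(S^m,\mathbbm{k})$ is, as a differential graded algebra, formal and quasi-isomorphic to the graded algebra $\mathbbm{k}[y]/(y^2)$ with $y$ placed in cohomological degree $m$ (this is the classical computation of the cohomology ring of an even- or odd-dimensional sphere, where the square of the top class vanishes for degree reasons once $m\geq 2$). Thus $D^c(C^*(S^m,\mathbbm{k}))$ is equivalent, as a triangulated category, to the perfect derived category of this Koszul-dual exterior/truncated polynomial algebra. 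First I would make this identification precise and fix, once and for all, a generator $S$ corresponding to the (one-dimensional) simple module.

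Next I would establish the Calabi--Yau property. The cleanest route is to invoke the fact that $C^*(S^m,\mathbbm{k})$ is a Poincar\'e duality algebra of formal dimension $m$: the fundamental class furnishes a nondegenerate graded pairing $H^i\times H^{m-i}\to \mathbbm{k}$, and this is exactly the input needed to produce a Serre functor on the perfect derived category equal to the shift $[m]$, whence the category is $m$-Calabi--Yau in the sense of the preceding definition. Once a Serre functor equal to $[m]$ is in place, the existence of almost split triangles is automatic from the general theory (a Hom-finite triangulated category with a Serre functor has almost split triangles), so that clause requires no separate argument.

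For the shape of the Auslander--Reiten quiver I would compute the graded endomorphisms of $S$ and of its shifts. A direct calculation with the two-dimensional algebra shows $\Hom(S,S[i])$ is $\mathbbm{k}$ for $i=0,m$ and $0$ otherwise, so $S$ is $m$-spherical in the sense defined above; this simultaneously settles the final sentence, since the thick subcategory generated by a single compact object whose endomorphism dga is the cohomology ring is all of $D^c$. The claim that the Auslander--Reiten quiver has exactly $m-1$ components of type $\mathbb{Z}\mathbb{A}_\infty$ I would obtain by listing the indecomposable compact objects — the shifts of $S$ together with the higher ``string'' objects built by iterated cones — and organising them according to their cohomological support modulo $m$; the Serre functor $[m]$ induces the $\tau$-action within each component, and the $m$ possible residues modulo $m$ for the bottom of a string, minus the one accounting for the identification across the shift, yield $m-1$ distinct $\mathbb{Z}\mathbb{A}_\infty$-components.

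The main obstacle will be the bookkeeping in this last step: precisely classifying the indecomposable objects of $D^c$, verifying that the irreducible morphisms assemble into meshes of type $\mathbb{Z}\mathbb{A}_\infty$, and counting the components correctly as exactly $m-1$ rather than $m$. This is where the hypothesis $m\geq 2$ is genuinely used (for $m=1$ the algebra is not formal in the same way and the component structure degenerates), and where I would lean on J{\o}rgensen's explicit analysis \cite{Jor04} rather than rederiving everything from scratch. The Calabi--Yau property and the spherical generation are comparatively routine consequences of Poincar\'e duality and compact generation; the delicate point is matching the combinatorics of the compact objects to the stated component count.
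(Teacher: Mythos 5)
The paper itself gives no proof of this statement: it is quoted from J{\o}rgensen \cite{Jor04} and labelled partially folklore, and the surrounding text only records the identifications $D^c(C^*(S^m,\mathbbm{k}))\simeq D^c(\mathbbm{k}[\varepsilon]/(\varepsilon^2))\simeq D^f(\mathbbm{k}[T])$ that your first step also establishes. Your overall strategy --- formality of $C^*(S^m,\mathbbm{k})$ for $m\geq 2$, Poincar\'e duality producing a Serre functor equal to $[m]$, almost split triangles from Hom-finiteness plus a Serre functor, and deferring the classification of indecomposables and the component count to \cite{Jor04} --- is exactly the intended route.

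There is, however, one genuine error: you take the spherical generator $S$ to be ``the (one-dimensional) simple module''. Over the dga $\mathbbm{k}[\varepsilon]/(\varepsilon^2)$ with $\varepsilon$ in degree $m$, the simple module $\mathbbm{k}$ is \emph{not} compact --- its minimal free resolution is infinite, and its graded endomorphism algebra is the polynomial ring $\mathbbm{k}[T]$, which is infinite dimensional --- so it neither lies in $D^c(C^*(X,\mathbbm{k}))$ nor is it $m$-spherical. The correct generator is the rank-one free module, i.e.\ $C^*(S^m,\mathbbm{k})$ itself, whose graded endomorphism ring is $H^*(S^m,\mathbbm{k})$; your later computation ``$\Hom(S,S[i])=\mathbbm{k}$ for $i=0,m$'' is precisely the computation for this object and not for the simple, so your write-up is internally inconsistent. (Under Koszul duality the roles swap: the free $A$-module corresponds to the simple $\mathbbm{k}[T]$-module, which is presumably the source of the confusion.) A second, smaller point: the count of $m-1$ components is most cleanly seen from $\tau=\mathbb{S}[-1]=[m-1]$, so that components of the Auslander--Reiten quiver are indexed by shift classes modulo $m-1$; your ``$m$ residues modulo $m$ minus one'' heuristic lands on the right number but not by the right mechanism. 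Since you defer to J{\o}rgensen for that step anyway, only the misidentification of $S$ genuinely needs fixing.
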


There are several other incarnations of this category throughout mathematics.
Almost by definition, $D^c(C^*(X,\mathbbm{k}))\cong
D^c(\mathbbm{k}[\varepsilon]/(\varepsilon^2))$ where
$\mathbbm{k}[\varepsilon]/(\varepsilon^2)$ is regarded as a differential graded
algebra with trivial differential and $\varepsilon$ in degree $m$. Using Koszul
duality, it is also equivalent to the category $D^f(\mathbbm{k}[T])$, the
derived category of complexes with finite dimensional (total) cohomology over
the differential graded algebra $\mathbbm{k}[T]$ with trivial differential and
$T$ in degree $-m+1$.

In representation theory, more precisely in the theory of cluster categories,
the category is better known as the $m$-cluster category of type $\mathbb{A}_\infty$
and can be defined as $D^b(\mmod A_\infty^{(1)})/\mathbb{S}[-m]$ where
$\mathbb{S}$ denotes the Serre functor of $D^b(\mmod A_\infty^{(1)})$ which
exists because $D^b(\mmod A_\infty^{(1)})$ has almost-split triangles.

Our goal in \cite{JK17} was to obtain similar results starting from
$A_\infty^{(d)}$ instead of $A_\infty^{(1)}$. As explained in the introduction
to this section, the construction of a higher cluster category of type
$\mathbb{A}_n$ was achieved by Oppermann--Thomas in \cite{OT12}. Our proof
follows a similar strategy but additional technical complications arise. The
first step, similar to \cite{OT12} is to realise that $D^b(\mmod
A_\infty^{(d)})/\mathbb{S}[-md]$ is not triangulated. But, thanks to a
construction by Keller \cite{Kel05}, it is possible to embed it into its
triangulated hull which we denote by $\mathcal{C}_{\infty,m}^{(d)}$. The
technically difficult part is to prove that this category is in fact
$\Hom$-finite. This is achieved by establishing a derived equivalence $D^b(\mmod
A_\infty^{(d)})\cong D^b(\mmod Z_\infty^{(d)})$ where $Z_\infty^{(d)}$ is a
locally bounded category. Recall that a subcategory $\mathcal{O}\subseteq \mathcal{D}$ of a triangulated category is called \emph{weakly $d\mathbb{Z}$-cluster-tilting} if it satisfies the analogous $\Ext$-vanishing property to the case of an abelian category, but is not necessarily functorially finite (nor generating-cogenerating) and furthermore it is closed under shift by $[d]$. 

Then, a straightforward generalisation of the methods
in \cite{Ami09, Ami11, Guo11} shows the following theorem:

\begin{theorem}[{\cite[Theorems 5.14 and 5.25]{OT12} for finite $n$ and $m=2$, \cite{JK17}}]
  Let $n$ be finite or infinity. Let $ A_n^{(d)}$ be the algebra (or category)
  defined in Section \ref{sec:higher_nakayama}. Let $\mathcal{D}$ be the
  category $D^b(\mmod A_n^{(d)})$. Let $\mathbb{S}$ be the corresponding Serre
  functor on $\mathcal{D}$. The triangulated hull $\mathcal{C}_{n,m}^{(d)}$ of
  $\mathcal{D}/\mathbb{S}[-md]$ is triangulated $\Hom$-finite $md$-Calabi--Yau.

  Let $\mathcal{U}$ be the subcategory $\mathcal{U}(A_n^{(d)})$ of $D^b(\mmod
  A_n^{(d)})$. The subcategory
  $\mathcal{O}_{n,m}^{(d)}:=\mathcal{U}/\mathbb{S}[-md]$ is weakly
  $d\mathbb{Z}$-cluster-tilting in $\mathcal{C}_{\infty,m}^{(d)}$. For finite
  $n$ the `weakly' can be omitted. For infinite $n$, the category
  $\mathcal{O}_{\infty,m}^{(d)}$ is classically generated by an $md$-spherical
  object $S$, i.e. closing the subcategory $S$ under shifts and $(d+2)$-angles
  gives the whole category $\mathcal{O}_{\infty,m}^{(d)}$.
\end{theorem}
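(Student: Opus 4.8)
The plan is to follow the Amiot--Guo template for constructing cluster categories as orbit categories, adapting it to the higher-dimensional setting of $A_n^{(d)}$. The starting point is the bounded derived category $\mathcal{D}=\Db(\mmod A_n^{(d)})$ together with its Serre functor $\mathbb{S}$, which exists because $A_n^{(d)}$ has finite global dimension $d$ (Theorem \ref{thm:higher_auslander_type_A}), so $\mathcal{D}$ is $\Hom$-finite with Serre duality. First I would form the orbit category $\mathcal{D}/\mathbb{S}[-md]$; since the orbit category of a triangulated category need not be triangulated, I would invoke Keller's construction \cite{Kel05} to pass to its triangulated hull $\mathcal{C}_{n,m}^{(d)}$, which comes equipped with a canonical triangle functor from $\mathcal{D}$. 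That $\mathcal{C}_{n,m}^{(d)}$ is $md$-Calabi--Yau follows formally: by construction the functor $\mathbb{S}[-md]$ becomes isomorphic to the identity in the hull, so $\mathbb{S}\cong[md]$, which is precisely the statement that $[md]$ is a Serre functor.

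The technically difficult part, as the excerpt itself signals, is proving that $\mathcal{C}_{n,m}^{(d)}$ is $\Hom$-finite; for infinite $n$ this is genuinely delicate because the ambient category is no longer the derived category of a finite-dimensional algebra. The strategy here is to reduce the computation of morphism spaces in the hull to a controlled sum $\bigoplus_{i\in\mathbb{Z}}\Hom_{\mathcal{D}}(X,\mathbb{S}^i[-mdi]Y)$ and to show that only finitely many summands are nonzero. The key device, already indicated in the excerpt, is to establish a derived equivalence $\Db(\mmod A_\infty^{(d)})\cong\Db(\mmod Z_\infty^{(d)})$ with $Z_\infty^{(d)}$ a locally bounded category; locally boundedness gives a uniform finiteness control on the supports of objects under the twist by $\mathbb{S}[-md]$, forcing all but finitely many terms in the orbit sum to vanish. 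I expect this $\Hom$-finiteness step to be the main obstacle: one must verify that the twist $\mathbb{S}[-md]$ shifts supports far enough in each application, which requires an explicit understanding of how $\mathbb{S}$, and hence $\nu_d$, acts on the interval-module building blocks of $A_\infty^{(d)}$.

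Granting $\Hom$-finiteness, the remaining assertions are comparatively formal. For the cluster-tilting statement I would take $\mathcal{U}=\mathcal{U}(A_n^{(d)})$, which by the earlier theorem is a $d$-cluster-tilting subcategory of $\mathcal{D}$ closed under $\nu_d=\nu[-d]$, and track its image $\mathcal{O}_{n,m}^{(d)}=\mathcal{U}/\mathbb{S}[-md]$ under the orbit projection. Since $\mathcal{U}$ is built from $\add\setP{\nu_d^i\Lambda}{i\in\mathbb{Z}}$ and is stable under $[d]$, its image is automatically closed under $[d]$ and inherits the $\Ext$-vanishing conditions in the prescribed degrees; one transports these via the compatibility of the orbit functor with the Serre structure. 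For finite $n$ the subcategory $\mathcal{U}$ has finitely many indecomposables up to the action of $\mathbb{S}[-md]$, so functorial finiteness and the generating-cogenerating condition hold and one may drop `weakly'. For infinite $n$ functorial finiteness genuinely fails, so one settles for the weak notion; the classical-generation claim is then obtained by exhibiting the image $S$ of a suitable projective-injective $A_\infty^{(d)}$-module as an $md$-spherical object, computing $\Hom_{\mathcal{O}_{\infty,m}^{(d)}}(S,S[i])$ from the orbit-sum formula, and showing that closing $S$ under shifts and $(d+2)$-angles recovers all of $\mathcal{O}_{\infty,m}^{(d)}$ by an inductive argument on the interval modules. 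This last step parallels the corresponding $d=1$ statement in \cite{HJ12} and the finite-$n$ result of \cite{OT12}.
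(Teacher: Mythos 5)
Your proposal follows essentially the same route as the paper: form the orbit category $\mathcal{D}/\mathbb{S}[-md]$, pass to Keller's triangulated hull, deduce the $md$-Calabi--Yau property formally, establish $\Hom$-finiteness via the derived equivalence $\Db(\mmod A_\infty^{(d)})\cong\Db(\mmod Z_\infty^{(d)})$ with $Z_\infty^{(d)}$ locally bounded, and then transport the cluster-tilting and spherical-generation statements by a generalisation of the Amiot--Guo arguments, exactly as the survey indicates for \cite{OT12} and \cite{JK17}. One small correction: for infinite $n$ the paper states that functorial finiteness of $\mathcal{O}_{\infty,m}^{(d)}$ in $\mathcal{C}_{\infty,m}^{(d)}$ is an \emph{open question}, not that it genuinely fails, though this does not affect the proof of the weak statement actually claimed.
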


An open question is still whether it is possible to get rid of the `weakly' in
the theorem in case $n=\infty$. The problem is that it is not clear whether in
this case $D^b(\mmod A_\infty^{(d)})/\mathbb{S}[-md]$ is functorially finite in
its triangulated hull which would be a sufficient condition.

Another open question is whether for $n=\infty$ there is a natural construction
associating these categories to certain topological objects which generalises
the case of $d=1$.

\subsection{Tilting modules for $A_n^{(d)}$}

Recall that there is a bijection between the isomorphism classes of
indecomposable basic tilting $A_n^{(1)}$-modules and the triangulations of the
regular polygon with $n+2$ vertices; moreover, this bijection is compatible with
tilting mutation on one side and flip of triangulations on the other side
\cite{BK04, OT12}.

The higher dimensional analogues of regular polygons are widely considered to be
the cyclic polytopes \cite[Section 6.1]{DLRS2010}. In order to define them let
$\setP{(t,t^2,\dots,t^{\delta})}{t\in \mathbb{R}}$ be the \emph{moment curve}.
Choose $p$ points on it. Then, a \emph{cyclic polytope} is the convex hull
$C(p,\delta)$ of such points. A \emph{triangulation} of a cyclic polytope is a
subdivision of it into $\delta$-simplices whose vertices are also vertices of
$C(p,\delta)$. By a result of Dey \cite{Dey93} the analogy to triangulations of
regular polygons is particularly strong if $\delta=2d$ is even. In this case, a
triangulation is determined by the $d$-dimensional internal simplices, more
precisely:

\begin{theorem}[cf. {\cite[Theorem 2.4]{OT12}}]
  Specifying a triangulation of $C(p,2d)$ is equivalent to giving a collection
  of $p-d-1\choose d$ non-intersecting $d$-simplices which do not lie on a lower
  boundary facet with vertices in the $p$ points.
\end{theorem}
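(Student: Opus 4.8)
The plan is to establish the equivalence between full triangulations of the even-dimensional cyclic polytope $C(p,2d)$ and maximal collections of non-intersecting internal $d$-simplices by exploiting the combinatorial self-duality phenomenon special to the dimension $2d$. The key structural input is Gale's evenness criterion, which characterises the facets of $C(p,\delta)$ purely combinatorially in terms of the cyclic ordering of the chosen points on the moment curve; in the even case $\delta=2d$ this criterion is what makes the $d$-dimensional simplices play the role of ``diagonals''. I would begin by fixing the standard combinatorial model: label the $p$ chosen points $1<2<\dots<p$ according to their parameter $t$ along the moment curve, and record each $(2d)$-simplex of a triangulation by its $(2d+1)$-element vertex set.

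First I would make precise the notion appearing in the statement. Call a $d$-simplex \emph{internal} if it does not lie on a lower boundary facet; using Gale's evenness criterion one checks that a $(d+1)$-subset $\{i_0<\dots<i_d\}$ lies on a lower facet exactly when it is ``separated'' in the appropriate sense, so the internal simplices are precisely those whose vertex gaps are all even, or dually those satisfying the intertwining condition of \cite[Theorem 2.4]{OT12}. Two such simplices are \emph{non-intersecting} when their relative interiors are disjoint, and I would translate this geometric condition into the purely order-theoretic ``non-crossing'' relation on $(d+1)$-subsets that Oppermann--Thomas use; the crux is that in dimension $2d$ two $d$-dimensional internal simplices either are compatible (can appear in a common triangulation) or cross, with no intermediate possibility. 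The count ${p-d-1 \choose d}$ is then the maximal size of a mutually non-crossing family, and matches the number of internal $d$-faces in any single triangulation.

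The main argument has two directions. For the forward direction I would show that every triangulation $T$ of $C(p,2d)$ determines such a collection by taking the set of its internal $d$-faces, verifying via Gale's criterion that these are non-intersecting and that there are exactly ${p-d-1 \choose d}$ of them independently of $T$ (this last invariance is essentially an $f$-vector computation for triangulations of cyclic polytopes). For the reverse direction, given a maximal non-intersecting collection $\mathcal{S}$ of internal $d$-simplices, I would reconstruct the triangulation by declaring a $(2d)$-simplex to belong to $T$ precisely when every $d$-face of it that is internal lies in $\mathcal{S}$, and then check that these top simplices tile $C(p,2d)$ without overlaps. Establishing that this reconstruction is well-defined and yields a genuine triangulation is where I expect the real work to be: one must prove that the compatibility data encoded by $\mathcal{S}$ is rich enough to pin down a unique full-dimensional subdivision, which amounts to showing that the ``internal $d$-faces'' functor is a bijection rather than merely an injection.

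The step I expect to be the main obstacle is precisely this reconstruction, i.e. proving that a maximal non-crossing family of internal $d$-simplices extends uniquely to a triangulation of the entire polytope. In the classical case $d=1$ this is the familiar statement that a maximal set of non-crossing diagonals of a polygon is a triangulation, proved by an easy induction on the number of vertices; the higher analogue requires a substitute for this induction. I would attempt to mimic the polygon argument by ``cutting'' along a suitable internal simplex in $\mathcal{S}$ to split $C(p,2d)$ into smaller cyclic-polytope-like pieces, but because cyclic polytopes are not literally preserved under such cuts, the honest approach is to invoke the structure theory of \cite[Section 6.1]{DLRS2010} together with the characterisation of \cite[Theorem 2.4]{OT12}, reducing the claim to the combinatorial statement that the poset of internal simplices under the non-crossing relation has maximal chains and antichains of the predicted sizes. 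Once that combinatorial backbone is in place, the remaining verifications are the routine Gale-criterion computations sketched above.
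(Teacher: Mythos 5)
Your outline correctly identifies the shape of the statement, but two of the steps you defer to ``routine verification'' are in fact the entire content of the theorem, and one of your structural choices is explicitly ruled out by the surrounding text. The most serious issue is that you slide from ``a collection of $\binom{p-d-1}{d}$ non-intersecting internal $d$-simplices'' to ``a maximal non-intersecting collection $\mathcal{S}$''. For $d\geq 2$ these are not the same: as remarked immediately after the theorem (and in \cite{OT12}), inclusion-maximal families of pairwise non-intersecting internal $d$-simplices can have strictly fewer than $\binom{p-d-1}{d}$ elements, and such families do not come from triangulations. This is exactly why the polygon-style induction you propose (cut along a simplex of $\mathcal{S}$ and recurse, using maximality to guarantee progress) breaks down in higher dimensions; any argument whose hypothesis is maximality rather than the exact cardinality cannot close.

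The two remaining gaps are as follows. (a) The invariance of the number of internal $d$-faces over all triangulations of $C(p,2d)$ is not ``essentially an $f$-vector computation'': the size of a triangulation of a polytope is not in general determined by the polytope, and the even-dimensional cyclic case is a genuine theorem requiring the specific structure of $C(p,2d)$ (in \cite{OT12} it is obtained from the explicit combinatorial description of triangulations of cyclic polytopes in \cite{DLRS2010} together with the bijection with the non-projective-injective summands of $M_n^{(d)}$, not by face counting). (b) The injectivity of the map sending a triangulation to its set of internal $d$-faces is precisely Dey's theorem \cite{Dey93}, which your write-up never actually invokes for that purpose; your reconstruction rule (a $2d$-simplex belongs to $T$ iff all of its internal $d$-faces lie in $\mathcal{S}$) is the right candidate inverse, but proving that it yields a triangulation when $\lvert\mathcal{S}\rvert=\binom{p-d-1}{d}$ is the content of \cite[Theorem 2.4]{OT12}, and your final paragraph proposes to establish it by invoking that same theorem, which is circular. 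A minor point: your characterisation of internality (``vertex gaps all even'') conflates Gale's evenness criterion for facets with the correct condition, namely that consecutive vertices of the $d$-simplex are separated by at least $2$ in the cyclic order; this should be fixed before any of the Gale-criterion computations can be carried out.
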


For $d=1$, every maximal collection of $1$-simplices has $p-2$ elements. For
$d>2$ this is in general no longer true, see \cite{OT12}.

There is an involutive operation on the set of triangulations of $C(p,\delta)$
called ``bistellar flip''. On the corresponding collection of non-intersecting
$d$-simplices it can be described by removing a $d$-simplex and replacing it by
the unique other $d$-simplex giving rise to a triangulation. When $\delta=2$,
this reduces to the usual flip of triangulations of planar polygons, see
\emph{loc. cit.} for further details.

The corresponding notion on the categorical side is tilting mutation: Given two
tilting modules $T$ and $T'$ contained in $\add M_n^{(d)}$ which only differ in
indecomposable direct summands $X$ and $X'$, $T$ is called the mutation of $T'$
in $X'$ and vice versa. In this case, there exists either an exact sequence
\[0\to X\to T_d\to \dots\to T_1\to X'\to 0\] with $T_i\in \add \overline{T}$
where $\overline{T}$ is a direct complement to $X$ in $T$ (resp. to $X'$ in
$T'$) or an exact sequence in the other direction.

\begin{theorem}[{\cite[Thm. 1.1]{OT12}}]
  There is a bijection
  \[\begin{tikzcd}[row sep=small]
      \set{\text{internal $d$-simplices of $C(n+2d,2d)$}}\dar[leftrightarrow]\\
      \set{\text{indecomposable non-projective-injective summands of
          $M_n^{(d)}$}}\end{tikzcd}\] which in turn induces a bijection
  \[\begin{tikzcd}[row sep=small]
      \set{\text{triangulations of $C(n+2d,2d)$}}\dar[leftrightarrow]\\
      \set{\text{basic tilting modules for $A_n^{(d)}$ contained in
          $M_n^{(d)}$}}.\end{tikzcd}\] Moreover, this bijection is compatible
  with tilting mutation on one side and bistellar flip of triangulations on the
  other side.
\end{theorem}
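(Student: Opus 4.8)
The plan is to establish the bijection between internal $d$-simplices of $C(n+2d,2d)$ and indecomposable non-projective-injective summands of $M_n^{(d)}$ first, and then bootstrap it up to the bijection between triangulations and tilting modules, finally checking compatibility of the two operations. For the first bijection, I would set up an explicit combinatorial parametrisation of both sides. Label the $n+2d$ points on the moment curve by $\{0,1,\dots,n+2d-1\}$ so that a $d$-simplex is a choice of $d+1$ of these points, i.e.\ a strictly increasing sequence $i_0 < i_1 < \dots < i_d$. The indecomposable summands of $M_n^{(d)}$ are by Theorem~\ref{thm:higher_auslander_type_A} exactly the interval modules $M_\lambda$ for $\lambda \in \mathbf{os}^{(d+1)}_n$, so the task is to match increasing sequences of length $d+1$ in $\{0,\dots,n+2d-1\}$ against tuples $\lambda=(\lambda_1,\dots,\lambda_{d+1})$ with $n-1\geq\lambda_1\geq\dots\geq\lambda_{d+1}\geq 0$. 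The natural device is the standard order-reversing affine shift $\lambda_j \mapsto i_j := \lambda_{d+2-j} + (j-1)$, which converts a weakly decreasing bounded tuple into a strictly increasing one; I would verify it is a bijection onto all $(d+1)$-subsets and then check that the ``internal'' condition (not lying on a lower boundary facet) corresponds precisely to excising those $\lambda$ whose interval module is projective-injective.

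Next I would promote this to the level of triangulations and tilting modules. Here Oppermann--Thomas's reformulation (the cited theorem just above) is the key input: a triangulation of $C(n+2d,2d)$ is the same data as a maximal collection of $\binom{n+d-1}{d}$ pairwise non-intersecting internal $d$-simplices. Under the first bijection, a collection of non-intersecting $d$-simplices must correspond to a set of interval modules satisfying a compatibility condition, and the plan is to identify that condition with the vanishing $\Ext^i_{A_n^{(d)}}(M_\lambda, M_\mu)=0$ for $1\le i\le d$ that characterises the summands of a tilting module inside $M_n^{(d)}$. Concretely, I would show that two $d$-simplices intersect in their relative interiors if and only if the corresponding interval modules have a nonvanishing higher self-extension obstructing them from sitting in a common tilting module; combined with the matching count $\binom{n+d-1}{d}$ of simplices in a triangulation against the number of summands in a basic tilting module, this upgrades the injection into a bijection on maximal collections, i.e.\ on triangulations.

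The genuinely hard part is the compatibility of mutation with bistellar flip, and getting the intersection/extension dictionary exactly right. For flips, I would take a triangulation $T$ and a $d$-simplex $X$ in it admitting a flip, producing $T'$ with $X$ replaced by the unique other simplex $X'$; on the categorical side the paper records that tilting mutation replaces a summand $X$ by $X'$ fitting into an exact sequence $0\to X\to T_d\to\dots\to T_1\to X'\to 0$ (or the reversed one) with middle terms in $\add\overline{T}$. The plan is to show that the combinatorial ``circuit'' supporting the bistellar flip---the minimal affine dependence among the vertices involved---translates under the moment-curve parametrisation into exactly this exchange sequence, with the $d+1$ other simplices of the circuit matching the terms $T_d,\dots,T_1$ and the two triangulations of the circuit matching the two possible directions of the sequence. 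This is delicate because one must track the sign pattern / upper-versus-lower facet structure of the circuit to see which of the two exact sequences occurs. I would expect the cleanest route is to reduce to the local situation of a single circuit in $C(p,2d)$ and invoke the explicit description of its two triangulations, then read off the exchange sequence summand by summand through the established dictionary; verifying that the non-intersection condition is preserved and that no new intersections are created under the flip is the step most likely to require care, and is where I would anticipate the main technical obstacle.
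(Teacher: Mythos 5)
Your overall architecture---an explicit dictionary between summands of $M_n^{(d)}$ and $d$-simplices, then Dey's theorem plus an ``intersection $\Leftrightarrow$ nonvanishing $\Ext^d$'' translation, then circuits versus exchange sequences---is exactly the route the paper (following Oppermann--Thomas) sketches. However, your concrete first step fails. The map $\lambda_j\mapsto i_j=\lambda_{d+2-j}+(j-1)$ sends $\mathbf{os}^{(d+1)}_n$ bijectively onto the $(d+1)$-subsets of a set with only $n+d$ elements, not onto the $(d+1)$-subsets of the $n+2d$ vertices of $C(n+2d,2d)$; indeed no bijection of the kind you claim can exist, since $|\mathbf{os}^{(d+1)}_n|=\binom{n+d}{d+1}\neq\binom{n+2d}{d+1}$. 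The correct dictionary spreads the entries by \emph{two}, $\lambda_j\mapsto\lambda_{d+2-j}+2(j-1)$, and its image is precisely the set of \emph{separated} $(d+1)$-subsets $i_1<\cdots<i_{d+1}$ of $\set{0,\dots,n+2d-1}$, i.e.\ those with all gaps $i_{j+1}-i_j\geq 2$. This separation condition is not something you can recover afterwards by excising a few elements: any $d$-simplex containing two adjacent vertices of the polytope lies inside a boundary facet and is never internal, and Dey's intersection criterion for simplices on the moment curve matches nonvanishing of $\Ext^d_{A_n^{(d)}}$ between interval modules only through the gap-two parametrisation. With your indexing, the second and third steps of your plan cannot be carried out.

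Two further problems. First, you conflate ``internal'' with ``not lying on a lower boundary facet,'' and this makes your counting inconsistent. Under the correct parametrisation, \emph{all} summands of $M_n^{(d)}$ correspond to separated subsets; the projective-injective ones are exactly those containing both $0$ and $n+2d-1$ (i.e.\ failing the cyclic gap condition), and these correspond to $d$-simplices on the \emph{upper} boundary, which belong to every triangulation. So non-projective-injective summands match genuinely internal simplices, of which a triangulation contains only $\binom{n+d-1}{d}-\binom{n+d-2}{d-1}=\binom{n+d-2}{d}$, whereas Dey's count $\binom{n+d-1}{d}$ and the number of summands of a basic tilting module (which always contains all $\binom{n+d-2}{d-1}$ projective-injectives) refer to the larger collections; either convention works, but your proposal mixes them, so the counts you compare do not actually agree. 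Second, your counting argument gives only one direction of the second bijection: a tilting module in $\add M_n^{(d)}$ yields a maximal non-intersecting collection, but conversely the module attached to a triangulation is merely $\Ext^d$-rigid with the right number of summands (here you should invoke the paper's observation that $\Ext^i$ vanishes automatically for $1\leq i\leq d-1$ between summands of the $d$-cluster-tilting module $M_n^{(d)}$, since $\gldim A_n^{(d)}=d$), and rigidity plus a count does not formally imply the tilting axioms---one still has to produce a coresolution of $A_n^{(d)}$ by modules in the given additive subcategory. Oppermann--Thomas prove this implication separately, and your plan needs an argument at this point rather than ``the counts match.''
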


The second bijection follows from the fact that under the first bijection two
$d$-simplices intersect if and only if the corresponding indecomposable modules
have non-trivial $\Ext^d$ in some direction. Note that because of the property
of $M_n^{(d)}$ being cluster-tilting and the fact that $\gldim A_n^{(d)}=d$, the
space $\Ext^d$ is the only possible extension not to vanish. This theorem gives
a first categorification of the triangulations of a cyclic polytope. In the
forthcoming subsection, we give a second one using cluster tilting subcategories
of a cluster category instead of tilting modules in a module category.

\subsection{Triangulations of cyclic polytopes and apeirotopes}

In this subsection, we describe the combinatorial model for the cluster-tilting
subcategories of the category $\mathcal{O}_{n,2}^{(d)}$ for finite or infinite
$n$, introduced in Subsection \ref{sec:categorical_construction}, in more
detail. This has been done for finite $n$ by Oppermann--Thomas \cite{OT12} and
for $n=\infty$ by the authors in \cite{JK17} building on earlier work
\cite{FZ02, BMRRT06, HJ12}. Following \cite{OT12}, we need to replace
functorially finiteness in the definition of cluster-tilting by a stronger
condition in the category $\mathcal{O}_{n,2}^{(d)}$:

\begin{definition}
  An object $T\in \mathcal{O}_{n,2}^{(d)}$ is called \emph{cluster-tilting} if
  \begin{enumerate}
  \item $\Hom_{\mathcal{O}_{n,2}^{(d)}}(T,T[d])=0$, and
  \item Any $X\in \mathcal{O}_{n,2}^{(d)}$ occurs in a $(d+2)$-angle
    \[X[-d]\to T_d\to T_{d-1}\to \dots\to T_1\to T_0\to X\] with $T_i\in \add
    T$.
  \end{enumerate}
\end{definition}

With this definition, for finite $n$, Oppermann and Thomas were able to prove
that cluster-tilting subcategories of $\mathcal{O}_{n,2}^{(d)}$ are precisely
the cluster-tilting subcategories of $\mathcal{C}_{n,2}^{(d)}$ contained in
$\mathcal{O}_{n,2}^{(d)}$. For infinite $n$ the problem is again that it is not
clear whether $\mathcal{O}_{\infty,2}^{(d)}$ is functorially finite in
$\mathcal{C}_{\infty,2}^{(d)}$. The only ingridient missing before stating
Oppermann--Thomas' categorification of triangulations of the cyclic polytope by
cluster-tilting subcategories of the higher cluster category of type
$\mathbb{A}$ is the notion of mutation of cluster-tilting objects.

\begin{definition}
  Let $X$ and $Y$ be indecomposable objects of $\mathcal{O}_{n,2}^{(d)}$ and
  $\overline{T}$ be such that $X\oplus \overline{T}$ and $Y\oplus \overline{T}$
  are cluster-tilting objects. Then $X\oplus \overline{T}$ is called a mutation
  of $Y\oplus \overline{T}$ at $Y$ and vice versa.
\end{definition}

In this case, there are \emph{exchange $(d+2)$-angles}
\[X\to E_d\to \dots\to E_1\to Y\] and
\[Y\to F_1\to \dots\to F_d\to X\] with $E_i, F_j\in \add \overline{T}$.

\begin{theorem}[{\cite[Thm. 1.2]{OT12}}]
  There is a bijection
  \[\begin{tikzcd}[row sep=small]
      \set{\text{internal $d$-simplices of $C(n+2d+1,2d)$}}\dar[leftrightarrow]\\
      \set{\text{indecomposable objects in
          $\mathcal{O}_{n,2}^{(d)}$}}\end{tikzcd}
  \]
  which in turn induces a bijection
  \[\begin{tikzcd}[row sep=small]
      \set{\text{triangulations of $C(n+2d+1,2d)$}}\dar[leftrightarrow]\\
      \set{\text{basic $2d$-cluster-tilting objects in
          $\mathcal{O}_{n,2}^{(d)}$}}.\end{tikzcd}
  \]
  The bistellar flip on the left hand side corresponds to the mutation of
  cluster-tilting objects on the right hand side.
\end{theorem}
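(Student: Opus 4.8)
The plan is to establish the bijection between internal $d$-simplices of $C(n+2d+1,2d)$ and indecomposable objects of $\mathcal{O}_{n,2}^{(d)}$ first, and then upgrade it to the level of triangulations and cluster-tilting objects. For the object-level correspondence, I would begin from the theorem of the previous subsection which already gives a bijection between internal $d$-simplices of $C(n+2d,2d)$ and indecomposable non-projective-injective summands of $M_n^{(d)}$. The passage from $C(n+2d,2d)$ to $C(n+2d+1,2d)$ reflects the addition of one extra point on the moment curve, which on the categorical side corresponds to passing from the module category (or rather $\mathcal{U}(A_n^{(d)})$) to the orbit category under $\mathbb{S}[-md] = \mathbb{S}[-2d]$. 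Concretely, I would index indecomposable objects of $\mathcal{O}_{n,2}^{(d)} = \mathcal{U}/\mathbb{S}[-2d]$ by $\mathbf{os}^{(d+1)}$-type interval data modulo the Serre-twist action, and match this combinatorics with the standard coordinate description of internal $d$-simplices of the cyclic polytope via their vertex sets among the $n+2d+1$ points.

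The crucial homological input, as the paper itself indicates, is that \emph{two internal $d$-simplices intersect if and only if the corresponding indecomposable objects have nonvanishing $\Hom(-,-[d])$ in some direction}. I would prove this by a direct computation of the relevant $\Hom$-spaces in $\mathcal{O}_{n,2}^{(d)}$, reducing them to $\Ext^d$-computations in $D^b(\mmod A_n^{(d)})$ via the orbit-category formula $\Hom_{\mathcal{O}}(X,Y[d]) \cong \bigoplus_{i\in\mathbb{Z}} \Hom_{\mathcal{D}}(X,(\mathbb{S}[-2d])^i Y[d])$. Because $M_n^{(d)}$ is cluster-tilting and $\gldim A_n^{(d)} = d$, at most one extension degree can survive, so these sums collapse to a manageable finite condition. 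Translating the combinatorial intersection condition on cyclic-polytope simplices into this $\Ext^d$-pattern is the heart of the matter; here I would lean on the explicit interval-module description together with Dey's characterisation of triangulations by $d$-simplices quoted above.

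Granting the object-level bijection together with the intersection/$\Hom$ dictionary, the upgrade to triangulations is essentially formal. A collection of pairwise non-intersecting internal $d$-simplices corresponds to a set of indecomposables with pairwise vanishing $\Hom(-,-[d])$, i.e.\ satisfying the first condition in the definition of cluster-tilting; maximality of the collection (which by Dey's theorem is exactly what makes it a triangulation) must then be matched with the $(d+2)$-angle covering condition (ii). To see that a maximal non-intersecting family satisfies (ii), I would use the exchange $(d+2)$-angles attached to bistellar flips: any indecomposable $X$ not in the family either already lies in it or can be resolved by a $(d+2)$-angle with middle terms in the family, and this is precisely the categorical shadow of the subdivision of the polytope. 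Finally, compatibility of mutation with bistellar flip follows by comparing the exchange $(d+2)$-angles $X\to E_d\to\cdots\to E_1\to Y$ and $Y\to F_1\to\cdots\to F_d\to X$ with the two simplices produced by a single flip.

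The main obstacle I anticipate is the precise verification of the intersection-versus-$\Hom$ equivalence, and in particular controlling the infinite direct sum in the orbit-category $\Hom$ when $n=\infty$. In the finite case one has functorial finiteness and can appeal directly to Oppermann--Thomas, but for $n=\infty$ the category $\mathcal{O}_{\infty,2}^{(d)}$ is only \emph{weakly} $d\mathbb{Z}$-cluster-tilting, so the covering condition (ii) cannot be deduced from a general functorial-finiteness argument and must instead be extracted from the explicit $(d+2)$-angle combinatorics of the cyclic apeirotope. Keeping the bookkeeping of the boundary simplices (those excluded as ``lower boundary facets'') aligned with the projective-injective objects that are discarded in forming $\mathcal{O}$ will require care, and this is where I expect most of the genuine work to lie.
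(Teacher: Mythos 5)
The paper does not actually prove this theorem: it is quoted from Oppermann--Thomas, and the only argumentative content the survey supplies is the remark (attached to the preceding tilting-module theorem) that under the object-level bijection two internal $d$-simplices intersect if and only if the corresponding indecomposables have non-vanishing $\Ext^d$ in some direction. Your outline follows exactly that strategy, and at the level of a plan the first two steps (object-level bijection via the extra vertex corresponding to passing to the orbit category, and the intersection-versus-$\Hom(-,-[d])$ dictionary computed through the orbit-category formula) are the right ones; you are candid that the central computation is not carried out, so this part is a programme rather than a proof.

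There is, however, one step that is genuinely wrong as stated. You write that inclusion-maximality of a non-intersecting family of internal $d$-simplices is ``by Dey's theorem exactly what makes it a triangulation,'' and you propose to deduce the covering condition (ii) formally from this maximality. Dey's theorem, as quoted in the paper, characterises triangulations of $C(p,2d)$ by families of non-intersecting internal $d$-simplices of the \emph{fixed cardinality} $\binom{p-d-1}{d}$, not by maximality under inclusion; and the survey explicitly warns that for higher $d$ an inclusion-maximal non-intersecting collection need not have this cardinality, i.e.\ need not come from a triangulation. On the categorical side this is precisely the statement that maximal rigid objects need not be cluster-tilting in these $2d$-Calabi--Yau categories, so condition (ii) cannot be obtained as a formal consequence of maximal rigidity; it has to be established separately (in Oppermann--Thomas this is done by producing the $(d+2)$-angles explicitly and controlling the number of indecomposable summands of a cluster-tilting object, together with the connectedness of the exchange/flip graph). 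Repairing your argument means replacing ``maximal non-intersecting'' by ``non-intersecting of cardinality $\binom{p-d-1}{d}$'' throughout and proving, rather than asserting, that exactly these families satisfy the $(d+2)$-angle condition. A minor additional point: the anticipated difficulties you describe for $n=\infty$ concern the subsequent theorem, not this one, which is stated for finite $n$ where functorial finiteness is available.
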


For our generalisation to the infinite situation, we consider the \emph{cyclic
  apeirotope}, i.e. the convex hull $C(\infty,2d)$ of
$\setP{(t,t^2,\dots,t^{2d})}{t\in \mathbb{Z}}$. An \emph{Ind-finite
  triangulation} is a collection of internal $d$-simplices such that for all
$I\subseteq \mathbb{Z}$ finite, there exists an interval $[a,b]$ containing $I$
such that the arcs with end points in $\setP{(t,t^2,\dots,t^{2d})}{t\in
  \mathbb{Z}\cap [a,b]}$ form the internal $d$-simplices of a triangulation.

\begin{theorem}
  There is a bijection
  \[\begin{tikzcd}[row sep=small]
      \set{\text{internal $d$-simplices of $C(\infty,2d)$}}\dar[leftrightarrow]\\
      \set{\text{indecomposable objects in
          $\mathcal{O}_{\infty,2}^{(d)}$}}\end{tikzcd}
  \]
  which in turn induces a bijection
  \[\begin{tikzcd}[row sep=small]
      \set{\text{Ind-finite triangulations of
          $C(\infty,2d)$}}\dar[leftrightarrow]\\ \set{\text{basic
          $2d$-cluster-tilting subcategories in
          $\mathcal{O}_{\infty,2}^{(d)}$}}.\end{tikzcd}
  \]

  The bistellar flip on the left hand side again corresponds to the mutation of
  $d$-cluster-tilting objects on the right hand side.
\end{theorem}

\bibliographystyle{alpha}

\bibliography{mathscinet}

\end{document}